\documentclass{amsart}
\usepackage{latexsym,amsmath, amscd, amssymb, amsthm,  euscript, mathrsfs, hyperref}
\usepackage{amssymb, paralist, xspace, graphicx, url, amscd, euscript, mathrsfs, stmaryrd}
\usepackage[dvipsnames]{xcolor}
\usepackage[all]{xy}
\usepackage{tikz}
\usepackage{tikz-cd} 
\usetikzlibrary{arrows.meta}
\usetikzlibrary{arrows}
\usetikzlibrary{cd}
\usetikzlibrary{matrix,arrows,decorations.pathmorphing}
\usepackage{makecell}

\usepackage[english]{babel}

\usepackage[letterpaper,top=2cm,bottom=2cm,left=3cm,right=3cm,marginparwidth=1.75cm]{geometry}

\usepackage{amsmath}
\usepackage{graphicx}
\usepackage[all]{xy}
\usepackage[utf8]{inputenc}
\usepackage{dsfont}
\usepackage{amsfonts}
\usepackage{mathrsfs}
\usepackage{mathtools}

\newtheorem{theorem}[subsection]{Theorem}

\newtheorem{lemma}[subsection]{Lemma}
\newtheorem{proposition}[subsection]{Proposition}
\newtheorem{corollary}[subsection]{Corollary}
\theoremstyle{definition}

\newtheorem{example}[subsection]{Example}
\newtheorem{definition}[subsection]{Definition}

\newtheorem{remark}[subsection]{Remark}

\newcommand{\GL}{\mathrm{GL}}
\newcommand{\SL}{\mathrm{SL}}
\newcommand{\PGL}{\mathrm{PGL}}

\newcommand{\Pic}{\mathrm{Pic}}
\newcommand{\Br}{\mathrm{Br}}
\newcommand{\Spec}{\mathrm{Spec} \ }
\newcommand{\tors}{\mathrm{tors}}
\newcommand{\et}{\mathrm{\acute{e}t}}
\newcommand{\uSpec}{\underline{\mathrm{Spec}}}
\newcommand{\Sym}{\mathrm{Sym}}

\newcommand{\Aut}{\mathrm{Aut}}
\newcommand{\uIsom}{\underline{\mathrm{Isom}}}

\newcommand{\Sch}{\mathrm{Sch}}

\newcommand{\pr}{\mathrm{pr}}
\newcommand{\Tot}{\mathrm{Tot}}
\newcommand{\id}{\mathrm{id}}
\newcommand{\op}{\mathrm{op}}
\newcommand{\pre}{\mathrm{pre}}
\newcommand{\Set}{\mathrm{Sets}}

\newcommand{\Hom}{\mathrm{Hom}}

\newcommand{\ms}{\mathscr}

\newcommand{\N}{\mathbb{N}}
\newcommand{\Z}{\mathbb{Z}}

\newcommand{\G}{\mathbb{G}}
\newcommand{\PP}{\mathbb{P}}
\newcommand{\bbA}{\mathbb{A}}

\counterwithin{equation}{subsection}

\title{The Brauer Group of $BG$ and Gerbe Structures of Moduli Spaces }
\author{Rose Lopez}

\begin{document}
\maketitle

\begin{abstract}
    We study the $\mu _N$-gerbe of curves of genus $g$ with an order $N$ automorphism, and explore what corresponding $H^2$-cohomology classes the components of this stack can have. In particular, we look at curves whose quotients by the order $N$ automorphism are genus 0, and completely determine the Brauer classes of these gerbes. The key technical input is the calculation of the Brauer group of $BG$, for $G$ a smooth connected semisimple linear algebraic group.
\end{abstract}

\section{Introduction}
The Brauer group of a field $k$ is an abelian group which classifies Morita equivalence classes of central simple algebras over $k$ with multiplication given by tensor product. The Brauer group has been generalized to schemes and established as a cohomological invariant of a scheme by Grothendieck in \cite{Gr1, Gr2, Gr3}. Brauer groups have been extended to stacks more recently, and various work has been done computing Brauer groups of stacks, for example, in  \cite{AM,Shin, iyer2021brauergroupsalgebraicstacks,achenjang2024brauergroupmathscry02, achenjang2025brauergroupstamestacks, bishop2025brauergroupstamestacky}. 

In \cite[Section 5]{achenjang2025brauergroupstamestacks}, the Leray spectral sequence is used to compute the Brauer group of $BG$ for a finite linearly reductive group $G$, and in \cite[Section 6]{loughran2025mallesconjecturebrauergroups}, the authors calculate the Brauer group of $BG$ for a finite \'etale group scheme $G$. We calculate the (cohomological) Brauer group of $BG$ for more general $G$ in the following theorem.

\begin{theorem} \label{thm:Br(BG) (1)}
Let $G$ be a smooth connected semisimple linear algebraic group over an algebraically closed field $k$. Let $\widetilde{G}$ be the universal cover of $G$, and $B$ the fundamental group of $G$, which fit into an exact sequence $1\to B\to \widetilde{G}\to G\to 1$. Let $X(B):=\Hom(B,\G_m)$ be the character group of $B$. Then
\[ H^i(BG, \G_m)=  \left\{
\begin{array}{ll}
      k^* & i=0 \\
      0 & i=1 \\
      X(B) & i=2. \\
\end{array} 
\right. \]

For $i=2$, an isomorphism $X(B)\to H^2(BG,\G_m)$ is given by $(\chi:B\to \G_m)\mapsto \chi_*(B\widetilde{G})$, where $\chi_*$ is the pushforward map
$\chi_*:H^2(BG,B)\to H^2(BG,\G_m)$.
\end{theorem}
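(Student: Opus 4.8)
I would treat $i=0,1$ first, where nothing is special about $G$ being semisimple beyond $X(G)=0$: a global section of $\G_m$ on $BG$ is a unit of $\mathcal O(BG)=k$, so $H^0(BG,\G_m)=k^*$, and $H^1(BG,\G_m)=\Pic(BG)\cong X(G)=0$; the same applies to $\widetilde G$. The real content is degree $2$. The plan is to (i) compute $H^2(B\widetilde G,\G_m)=0$ for the simply connected group, then (ii) transport this across the $B$-gerbe $B\widetilde G\to BG$ attached to $1\to B\to\widetilde G\to G\to 1$, and (iii) recognize the resulting isomorphism as $\chi\mapsto\chi_*(B\widetilde G)$.

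For (i) I would use the descent (simplicial) spectral sequence of the universal torsor $\Spec k\to B\widetilde G$, with $E_1^{p,q}=H^q(\widetilde G^{\,p},\G_m)\Rightarrow H^{p+q}(B\widetilde G,\G_m)$. Since $k$ is algebraically closed, $H^1(\Spec k,\G_m)=H^2(\Spec k,\G_m)=0$; since $\widetilde G$ is semisimple, $\mathcal O(\widetilde G^{\,p})^*=k^*$ (Rosenlicht, together with the standard description of units on a product); and $\Pic(\widetilde G)=0$ because $\widetilde G$ is simply connected. A direct inspection of the $E_2$-page then gives $E_2^{0,2}=E_2^{1,1}=0$ while $E_2^{2,0}=H^2_{\mathrm{rat}}(\widetilde G,\G_m)$ survives to $E_\infty$, so $H^2(B\widetilde G,\G_m)=H^2_{\mathrm{rat}}(\widetilde G,\G_m)$, the group of central extensions $1\to\G_m\to E\to\widetilde G\to 1$ of algebraic groups. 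Any such $E$ is reductive with one-dimensional radical $\G_m$, hence $E=\G_m\cdot[E,E]$; the semisimple group $[E,E]$ maps onto $[\widetilde G,\widetilde G]=\widetilde G$ by a central isogeny, hence isomorphically since $\widetilde G$ is simply connected, and this isomorphism splits $E$. Thus $H^2(B\widetilde G,\G_m)=0$ (and the same spectral sequence gives $H^1=0$, $H^0=k^*$).

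For (ii), applying $B(-)$ to the central extension yields a fibration $BB\to B\widetilde G\xrightarrow{\pi} BG$ realizing $\pi$ as the gerbe, banded by the constant sheaf $B$, that classifies liftings of $G$-torsors to $\widetilde G$-torsors. Computing higher direct images fppf-locally on $BG$, where $\pi$ becomes $BB\times U\to U$, gives $R^0\pi_*\G_m=\G_m$ and $R^1\pi_*\G_m=\underline{X(B)}$ (trivial monodromy, because $B$ is central), while $R^2\pi_*\G_m$ plays no role below. Feeding $H^1(BG,\G_m)=0$ and the vanishing $H^1(B\widetilde G,\G_m)=H^2(B\widetilde G,\G_m)=0$ from step (i) into the low-degree exact sequence of the Leray spectral sequence of $\pi$ collapses it to an isomorphism $d_2\colon X(B)=H^0(BG,R^1\pi_*\G_m)\xrightarrow{\ \sim\ }H^2(BG,\G_m)$.

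For (iii), pushing the central extension out along $\chi\colon B\to\G_m$ gives $1\to\G_m\to E_\chi\to G\to 1$, whose lifting gerbe $BE_\chi\to BG$ is, by construction, $\chi_*$ of $B\widetilde G\to BG$; since the transgression $d_2$ in the Leray sequence of a gerbe carries a character of its band to the class of the corresponding push-out gerbe, $d_2(\chi)=\chi_*(B\widetilde G)$, which is the asserted formula. I expect the genuine work to lie in step (i) — verifying that the off-diagonal $E_2$-terms vanish and that $H^2_{\mathrm{rat}}(\widetilde G,\G_m)=0$ — together with the precise identification of $R^1\pi_*\G_m$ and of the transgression in (ii)–(iii); the remainder is spectral-sequence bookkeeping.
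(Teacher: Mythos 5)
Your proposal is correct, and for the substantive part ($i=2$) it takes a genuinely different route from the paper. The paper runs the simplicial descent spectral sequence of $\Spec k\to BG$ directly on $BG$: the $q=0$ row reduces to constants because $X(G^p)=0$, the $q=1$ row is handled via Sansuc's isomorphism $\Pic(G)\cong X(B)$ together with the observation that $d^{1,1}$ vanishes because characters are homomorphisms, and this gives $H^2(BG,\G_m)\cong X(B)$ abstractly; the identification of the isomorphism with $\chi\mapsto\chi_*(B\widetilde G)$ is then proved by hand (surjectivity by pulling a $\G_m$-gerbe back to $B\widetilde G$, where $H^2=0$ forces a section; injectivity by an explicit equivalence $\ms H_1\times_{BG}\ms H_2\simeq \ms H_1\times B\G_m$). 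You instead run the simplicial spectral sequence only on $B\widetilde G$ (inputs: $X(\widetilde G)=0$, $\Pic(\widetilde G)=0$) and transport across the $B$-gerbe $\pi\colon B\widetilde G\to BG$ via its Leray spectral sequence, reading off the isomorphism as the transgression $d_2\colon H^0(BG,R^1\pi_*\G_m)=X(B)\to H^2(BG,\G_m)$; this is precisely the ``formal spectral sequence'' alternative that the paper's remark after the proof alludes to, and it makes surjectivity and injectivity automatic. The cost is that your identification of the map rests entirely on the standard but nontrivial fact that the Leray $d_2$ of a gerbe carries a character of the band to the class of the pushed-out gerbe (up to a sign, which is harmless since inversion is an automorphism of $X(B)$); this is the only point where the specific map $\chi\mapsto\chi_*(B\widetilde G)$ enters, so it should be cited or proved rather than asserted. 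One further simplification: your detour through $E_2^{2,0}=H^2_{\mathrm{rat}}(\widetilde G,\G_m)$ and the structure theory of central extensions is correct but unnecessary, since $\mc O(\widetilde G^p)^*=k^*$ already makes the bottom row the constant complex $k^*\to k^*\to\cdots$, which is exact in positive degrees (and identifying $H^2_{\mathrm{rat}}$ with the group of all central extensions tacitly uses $\Pic(\widetilde G)=0$ to split each extension scheme-theoretically, which you have in any case).
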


With the understanding of the Brauer group of $BG$, we may also understand the classes of Brauer-Severi varieties over $BG$, which we call \textit{Brauer-Severi stacks} (see Section \ref{sec: Br-Sev stacks}). Classically, a Brauer-Severi variety of dimension $n$ over a field $k$ is a variety $X$ over a field $k$ that is isomorphic to $\PP^n_{\overline{k}}$ after base change to an algebraic closure of $k$. Its obstruction to being isomorphic to $\PP^n_k$ itself is the existence or non-existence of a line bundle of degree 1 on $X$, which would then be isomorphic to $\ms O_{\PP^n_k}(1)$. More generally, a Brauer-Severi stack of relative dimension $n$ is a smooth proper morphism $\ms X\to S$ which is \'etale locally isomorphic to $\PP^n_S$. The existence of a line bundle of degree 1 on $\ms X$ no longer guarantees that $\ms X$ is globally isomorphic to $\PP^n_S$, but rather only guarantees the existence of a rank $n+1$ vector bundle $V$ on $S$ such that $\ms X\cong \PP V$.

There is an equivalence of categories between Brauer-Severi varieties of relative dimension $n$ over $S$ and $\PGL_{n+1}$-torsors over $S$. The image $\delta([X])\in H^2(S,\G_m)$ of the class of $X$ in $H^1(S,\PGL_{n+1})$ via the short exact sequence $1\to \G_m\to \GL_{n+1} \to \PGL_{n+1}\to 1$ is equal to the class of the $\G_m$-gerbe of degree 1 line bundles on $X$, which we call the \textit{Brauer class of the Brauer-Severi variety $X$}. It is trivial if and only if there is a degree 1 line bundle on $X$. 

We study Brauer-Severi stacks of the form $[\PP V/G]\to BG$, and prove the following theorem which calculates thier Brauer classes. 

\begin{theorem} \label{thm: chi(BG)=ms G (1)}
    Let $V$ be a representation of $\widetilde{G}$, acting via $\rho: \widetilde{G}\to \GL(V)$ such that $B\subseteq \widetilde{G}$ acts by a character $\chi:B\to \G_m$. Then we get an induced action of $G$ on $\PP V$, and $[\PP V/G]\to BG$ is a Brauer-Severi stack over $BG$. Let $\ms G$ be the $\G_m$-gerbe of degree 1 line bundles on $[\PP V/G]$ over $BG$. Then $\chi^{-1}_*(B\widetilde{G})=\ms G$, and thus the class of $\ms G$, which is the Brauer class of $[\PP V/G]\to BG$ in $H^2(BG, \G_m)=X(B)$, is given by $\chi^{-1}$.
\end{theorem}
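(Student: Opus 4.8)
The plan is to realize $\ms G$ explicitly as a pushout of the $B$-gerbe $B\widetilde{G}\to BG$ along the character $\chi^{-1}$. There are two inputs. First, $B\widetilde{G}\to BG$ is the stack whose fiber over a $BG$-scheme $T$ (i.e.\ a $G$-torsor $P\to T$) is the groupoid of $\widetilde{G}$-torsors $\widetilde{P}\to T$ equipped with an isomorphism $\widetilde{P}/B\cong P$; concretely it is the gerbe of $\widetilde{G}$-lifts of the tautological $G$-torsor, and the obstruction class it represents is $\delta_G([\tau_{BG}])=[B\widetilde{G}]\in H^2(BG,B)$. Second, the line bundle $\ms O_{\PP V}(1)$ carries a $\widetilde{G}$-linearization for which the subgroup $B\subseteq\widetilde{G}$ — which lies in the stabilizer of every point of $\PP V$ since $B$ acts on $\PP V$ trivially — acts on fibers through the character $\chi^{-1}$: indeed $B$ acts on the tautological subbundle $\ms O_{\PP V}(-1)\subseteq\underline{V}$ through $\chi$, as $b\in B$ acts on $V$ by $\chi(b)\cdot\id$, hence on $\ms O_{\PP V}(1)=\ms O_{\PP V}(-1)^{\vee}$ through $\chi^{-1}$.

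Concretely, I would define a morphism $\Phi\colon B\widetilde{G}\to\ms G$ of stacks over $BG$ as follows. Given $T$, a $G$-torsor $P\to T$, and a lift $\widetilde{P}\to T$, one has $\PP V\times^{\widetilde{G}}\widetilde{P}=\PP V\times^{G}P$ because $B$ acts trivially on $\PP V$, so $\ms X_T:=[\PP V/G]\times_{BG}T$ is canonically this projective bundle, and $L_{\widetilde{P}}:=\ms O_{\PP V}(1)\times^{\widetilde{G}}\widetilde{P}$ is a line bundle on it of fiberwise degree $1$; set $\Phi(\widetilde{P})=L_{\widetilde{P}}$. This is functorial and compatible with the structure maps to $BG$. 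Unwinding the associated-bundle construction shows that an automorphism $b\in B=\uAut(\widetilde{P})$ acts on $L_{\widetilde{P}}$ as multiplication by $\chi^{-1}(b)$; that is, $\Phi$ lies over the homomorphism of bands $\chi^{-1}\colon B\to\G_m$. By the general theory of gerbes, a morphism of gerbes over $BG$ lying over a homomorphism of bands factors through an isomorphism out of the pushout; hence $\Phi$ induces an isomorphism $\chi^{-1}_{*}(B\widetilde{G})\xrightarrow{\ \sim\ }\ms G$ of $\G_m$-gerbes over $BG$. Passing to cohomology classes and invoking the isomorphism $X(B)\to H^2(BG,\G_m)$, $\chi\mapsto\chi_{*}(B\widetilde{G})$, of Theorem~\ref{thm:Br(BG) (1)}, this gives $[\ms G]=\chi^{-1}_{*}[B\widetilde{G}]$, so the Brauer class of $[\PP V/G]\to BG$ in $H^2(BG,\G_m)=X(B)$ is $\chi^{-1}$.

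The step I expect to be most delicate is the computation of the band homomorphism induced by $\Phi$ — equivalently, pinning down the $\widetilde{G}$-linearization of $\ms O_{\PP V}(1)$ and chasing it through $\times^{\widetilde{G}}\widetilde{P}$ with consistent left/right conventions. This is precisely where the inversion $\chi\mapsto\chi^{-1}$ enters, and it is sensitive both to which degree-$1$ line bundle (ample $\ms O(1)$ versus its dual) is taken and to the orientation of the associated-bundle functor; the opposite conventions would produce $\chi$. As an independent check I would verify the same answer via functoriality of connecting maps for the morphism of short exact sequences
\[
\begin{array}{ccccccccc}
1 & \to & B & \to & \widetilde{G} & \to & G & \to & 1\\
  &     & \downarrow\chi & & \downarrow\rho & & \downarrow\bar\rho & &\\
1 & \to & \G_m & \to & \GL(V) & \to & \PGL(V) & \to & 1,
\end{array}
\]
where $\rho|_B=\chi$ by hypothesis and $\bar\rho\colon G\to\PGL(V)$ is the induced map: this identifies the Brauer class $\delta\big(\bar\rho_{*}[\tau_{BG}]\big)$ of the $\PGL(V)$-torsor defining $[\PP V/G]\to BG$ with $\chi_{*}\,\delta_G([\tau_{BG}])=\chi_{*}[B\widetilde{G}]$ up to the convention-dependent inversion discussed above, consistently with the description of the Brauer class of a Brauer–Severi stack recalled in the introduction.
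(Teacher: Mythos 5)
Your proposal is correct and follows essentially the same route as the paper: the paper likewise constructs a morphism of gerbes $B\widetilde{G}\to\ms G$ over $BG$ sending a $\widetilde{G}$-torsor $\widetilde{T}$ to the associated degree-one line bundle, realized as the $\G_m$-torsor $\widetilde{T}\times^{\widetilde{G}}(\bbA_V\setminus\{0\})\to\widetilde{T}\times^{\widetilde{G}}\PP V$ with $\bbA_V\setminus\{0\}=\Tot(\ms O_{\PP V}(1))\setminus\{0\}$, and then reads off the band homomorphism as $\chi^{-1}$ exactly as you do. The convention-sensitivity you flag is real, and the paper handles it by fixing a functor-of-points description of $\bbA_V$ and $\PP V$ with an explicit $\widetilde{G}$-action before running the associated-bundle computation $[(b\cdot t,v)]=[(t,b^{-1}\cdot v)]=[(t,\chi(b)^{-1}v)]$.
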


The initial motivation for studying the Brauer group and Brauer-Severi stacks on $BG$ was to understand the gerbe structure of the stack of genus $g$ curves with an automorphism of order $N$, which we now explain.

Let $\ms M_g$ be the moduli stack of smooth curves of genus $g>1$. The inertia stack $I(\ms M_g)$ is the stack with objects pairs $(X\in \ms M_g, \alpha\in \Aut(X))$ and morphisms are isomorphisms $\phi:X'\to X$ such that $\phi\circ \alpha'=\alpha\circ \phi$. The inertia $I(\ms M_g)$ is a disjoint union
$$I(\ms M_g)=\coprod_{N\in \N>0} I_N(\ms M_g),$$
such that for $(X,\alpha)\in I_N(\ms M_g)$, $\alpha$ has order $N$. We call the substacks $I_N(\ms M_g)$ the {\it $N$th twisted sectors} of the inertia stack.

Since every object of $I_N(\ms M_g)$ has an order $N$ automorphism, this stack is a $\mu_N$-gerbe, and corresponds to some class $[I_N(\ms M_g)]\in H^2(\mu_N)$. We look at the components of $I_N(\ms M_g)$ consisting of curves whose quotients by the order $N$ automorphism $\alpha$ has genus 0 and completely determine the \textit{Brauer classes} of these gerbes. For a $\mu_N$-gerbe $\ms G\to S$ with class $[\ms G]\in H^2(S,\mu_n)$, its pushout to $\G_m$ under the inclusion $\mu_N\hookrightarrow\G_m$ is the $\G_m$-gerbe $\ms G^{\wedge \G_m}\to S$ and its \textit{Brauer class} is given by $[\ms G^{\wedge \G_m}]\in H^2(S,\G_m)$.

Given an object $(X,\alpha)$ of $I_N(\ms M_g)$, we may take the quotient of $X$ by the action of $\mu_N$ induced by $\alpha$ to get a Galois cover with group $\mu_N$, $\pi:X\to C=X/\mu_N$. The quotient curve $C$ can take many possible genera with varying ramification divisors. Following \cite{Pagani_2013}, using ideas from \cite{Pardini1991}, to specify which genera and degrees of ramification divisors are allowed on $C$, we define a {\it $g$-admissible datum} to be an $N+1$-tuple $A=(g', N, d_1,\ldots,d_{N-1})$ such that $g'\leq g$ and $N>1$, which satisfies Riemann-Hurwitz
\begin{equation} \label{eq:Riemann-Hurwitz}
    2g-2=N(2g'-2)+\sum_i d_i(N-\gcd(i,N))
\end{equation}
and the structural equation of abelian covers
\begin{equation}
\sum_i id_i=0\pmod N.
\end{equation}

Define the stack $\ms M_A$ to be the stack with objects 
\[\ms M_A(S)=\{(f:C\to S, D_1, \ldots D_{N-1}, \ms L, \phi)\}\]
where $f:C\to S$ is a smooth curve of genus $g'$, the $D_i$ are disjoint divisors of degree $d_i$, that is, sections of $(\Sym_S^{d_i} C\setminus \Delta_{d_i})\to S$, where $\Delta_{d_i}$ is the big diagonal, $\ms L$ is a line bundle on $C$ and $\phi: \ms L^{\otimes N}\to \ms O_{C} (\sum iD_i)$ is an isomorphism. Morphisms will be defined in Section \ref{section: The moduli spaces}.

Every object of $\ms M_A$ corresponds to a $\mu_N$-cover $X\to C$ of a smooth genus $g$ curve over $C$, ramified over the $D_i$. Conversely, every $\mu_N$-cover with the covering curve having genus $g$ corresponds to some object of $\ms M_A$ for some $g$-admissible datum $A$. We roughly explain this correspondence in Section \ref{section: The moduli spaces}. Let $\ms M_A'$ be the substack of $\ms M_A$ such that the corresponding $X$ is connected. The following theorem relates $I_N(\ms M_g)$ to the $\ms M_A'$.

\begin{theorem} \cite[2.2, Corollary 1]{Pagani_2013}, \cite[Theorem 2.1, Proposition 2.1]{Pardini1991} 
    Fix $g,N>1$. Then 
    $I_N(\ms M_g)=\coprod_{A} \ms M_A'$, where the disjoint union ranges over all $g$-admissible $A=(g', N, d_1, \ldots, d_{N-1})$. 
\end{theorem}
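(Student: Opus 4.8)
The plan is to construct mutually quasi-inverse morphisms of stacks
\[ \Phi\colon I_N(\ms M_g)\longrightarrow\coprod_A\ms M_A',\qquad \Psi\colon\coprod_A\ms M_A'\longrightarrow I_N(\ms M_g), \]
using the structure theory of abelian ($\mu_N$-)covers of Pardini \cite{Pardini1991} in the relative setting of Pagani \cite{Pagani_2013}: $\Phi$ sends a family of curves with an order-$N$ automorphism to its quotient family equipped with the induced cyclic-cover data, and $\Psi$ reconstructs the family as a cyclic cover.

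\textbf{The functor $\Phi$.} Let $(f\colon\ms X\to S,\alpha)$ be an object of $I_N(\ms M_g)$, so that $\alpha$ is an $S$-automorphism of the smooth genus-$g$ family $\ms X$ which is fiberwise of order $N$; we encode this as a $\mu_N$-action on $\ms X$ over $S$ that is fiberwise faithful. The quotient $\pi\colon\ms X\to\ms C:=\ms X/\mu_N$ exists, is finite flat of degree $N$, and $\ms C\to S$ is a smooth family of curves of some genus $g'\le g$. The pushforward $\pi_\ast\ms O_{\ms X}=\bigoplus_{j\in\Z/N}\ms L_j$ decomposes into $\mu_N$-eigensheaves, with $\ms L_0=\ms O_{\ms C}$ and each $\ms L_j$ invertible; put $\ms L:=\ms L_1^{\otimes(-1)}$. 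The branch locus of $\pi$, stratified by local monodromy type, is a sum $\sum_i D_i$ of pairwise disjoint, multiplicity-free relative Cartier divisors, i.e.\ sections of $(\Sym_S^{d_i}\ms C\setminus\Delta_{d_i})\to S$, and the algebra structure on $\pi_\ast\ms O_{\ms X}$ in degree $N$ furnishes a canonical isomorphism $\phi\colon\ms L^{\otimes N}\xrightarrow{\ \sim\ }\ms O_{\ms C}(\sum_i iD_i)$. Applying Riemann--Hurwitz on geometric fibers gives \eqref{eq:Riemann-Hurwitz} for $(g',N,d_1,\dots,d_{N-1})$, while the mere existence of $\phi$ forces $N\deg\ms L=\sum_i id_i$, hence $\sum_i id_i\equiv 0\pmod N$; so $A:=(g',N,d_1,\dots,d_{N-1})$ is $g$-admissible and $(\ms C,D_1,\dots,D_{N-1},\ms L,\phi)\in\ms M_A(S)$. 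Since $\ms X$ has connected fibers, this object lies in the substack $\ms M_A'$. Because the quotient, the eigensheaf decomposition, the branch divisors and $\phi$ are all formed compatibly with base change $S'\to S$, this defines a morphism of stacks $\Phi$.

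\textbf{The functor $\Psi$ and mutual inverseness.} Given $(f\colon C\to S,\{D_i\},\ms L,\phi)\in\ms M_A'(S)$, form the graded $\ms O_C$-algebra
\[ \ms A=\bigoplus_{j=0}^{N-1}\ms A_j,\qquad \ms A_j:=\ms L^{\otimes(-j)}\otimes\ms O_C\Big(\textstyle\sum_i\big\lfloor ij/N\big\rfloor D_i\Big), \]
with multiplication $\ms A_j\otimes\ms A_{j'}\to\ms A_{j+j'}$ for $j+j'<N$ given by the natural inclusion coming from superadditivity of the floor, and $\ms A_j\otimes\ms A_{j'}\to\ms A_{j+j'-N}$ for $j+j'\ge N$ given by the same inclusion after applying $\phi^{-1}$; this is the normalization of the naive cyclic cover attached to $\phi$, the floor-twists being exactly what makes $\uSpec_C\ms A$ normal. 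Set $\ms X:=\uSpec_C\ms A$. Since the $D_i$ are reduced and pairwise disjoint, $\ms X\to C$ is finite flat and $\ms X\to S$ is a smooth family of curves, of genus $g$ by \eqref{eq:Riemann-Hurwitz} (which holds because $A$ is admissible); the $\Z/N$-grading gives a $\mu_N$-action on $\ms X$ over $S$, faithful because $\ms A_1=\ms L^{\otimes(-1)}\ne 0$, so a generator gives an automorphism $\alpha$ of $\ms X$ of order exactly $N$, while $\ms X$ is connected by the choice of $\ms M_A'$. Hence $(\ms X,\alpha)\in I_N(\ms M_g)(S)$, functorially in $S$, giving the morphism $\Psi$. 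Finally, $\Psi\circ\Phi\cong\id$ because for $(\ms X,\alpha)$ the canonical comparison map $\pi_\ast\ms O_{\ms X}=\bigoplus_j\ms L_j\to\ms A$, checked on local generators along the branch divisor, is an isomorphism of graded $\ms O_{\ms C}$-algebras, so $\uSpec_{\ms C}(\pi_\ast\ms O_{\ms X})\cong\ms X$ equivariantly; and $\Phi\circ\Psi\cong\id$ because $\ms A_1=\ms L^{\otimes(-1)}$ recovers $\ms L$, the monodromy stratification of the branch locus of $\uSpec_C\ms A$ recovers $\sum_i D_i$, and the degree-$N$ relation of $\ms A$ recovers $\phi$. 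As distinct $A$ are distinguished by $g'$ and by the profile $(d_1,\dots,d_{N-1})$, the $\ms M_A'$ are pairwise disjoint, and $I_N(\ms M_g)=\coprod_A\ms M_A'$ follows.

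\textbf{The main obstacle.} The heart of the argument is the cyclic-cover construction in $\Psi$ together with the comparison $\Phi\circ\Psi\cong\id$: one must identify the precise graded algebra $\ms A$ — in particular, the correct floor-function twists along the $D_i$ — for which $\uSpec_C\ms A$ is not merely normal but \emph{smooth} over $S$, and one must show that the admissibility conditions \eqref{eq:Riemann-Hurwitz} and $\sum_i id_i\equiv 0\pmod N$, which are plainly necessary, are also \emph{sufficient} for the construction to produce an object of $\ms M_g$. This is exactly what Pardini's classification of abelian covers \cite[Theorem 2.1, Proposition 2.1]{Pardini1991} provides, in the relative/stacky formulation of \cite[2.2, Corollary 1]{Pagani_2013}, and I would invoke it rather than reprove it. A second, more routine point is checking that every construction above commutes with arbitrary base change — flatness of $\pi_\ast\ms O_{\ms X}$ and of $\ms A$, and the persistence of the $D_i$ as disjoint multiplicity-free relative Cartier divisors — so that $\Phi$ and $\Psi$ are genuine morphisms of stacks and the conclusion is an equivalence of stacks, not merely a bijection on isomorphism classes over a field.
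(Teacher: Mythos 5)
Your proposal is correct and matches the paper's treatment: the paper does not reprove this theorem but cites Pardini and Pagani, and its ``rough explanation'' in Section \ref{section: The moduli spaces} is exactly your $\Phi$/$\Psi$ correspondence via the eigensheaf decomposition $\pi_*\ms O_X=\bigoplus_j \ms L_j^{-1}$ and the reconstruction $X=\uSpec_C(\bigoplus_j\ms L_j^{-1})$ with the floor-function twists. You correctly isolate the one genuinely nontrivial input (smoothness of the normalized cyclic cover and sufficiency of the admissibility conditions) and, like the paper, defer it to \cite[Theorem 2.1, Proposition 2.1]{Pardini1991} and \cite[2.2, Corollary 1]{Pagani_2013}.
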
 

When $g'=0$, either $\ms M_A'=\ms M_A$ or $\ms M_A'=\emptyset$ \cite[2.2, Remark 3]{Pagani_2013}.  Since we only study the case when $g'=0$ in this paper, it suffices to study the $\ms M_A$ as defined above, and then only a subset of such $\ms M_A$ will correspond to components of $I_N(\ms M_g)$ To study the $\ms M_A$, we define a moduli stack $\overline{\ms M_A}$ that contains $\ms M_A$ as an open substack, namely, we allow the divisors to coincide. 

When $g'=0$, $\overline{\ms M_A}$, can be realized as a $\mu_N$-gerbe over a stack $\overline{\ms N_A}$, which is a composition of Brauer-Severi stacks over the moduli space of genus 0 curves, $B\PGL_2$. Using Theorems \ref{thm:Br(BG) (1)} and \ref{thm: chi(BG)=ms G (1)}, and a stacky version of a theorem of Gabber \cite[Theorem 2, page 193]{Gabber}, which relates the Brauer group of a Brauer-Severi variety to that of its base, we compute the Brauer group of $\overline{\ms N_A}$ and calculate the class of $\overline{\ms M_A}$ in the Brauer group of $\overline{\ms N_A}$:

\begin{theorem} \label{thm:N_A}
    Let $N, d_1,\ldots, d_{N-1}\in \Z$ such that $N|d:=\sum id_i$ and 
    \[g:=\dfrac{1}{2}\left(-2N+\sum_i id_i(N-\gcd(i,N))+2\right)\] 
    is an integer greater than 1. Let $A=(0, N, d_1, \ldots, d_{N-1})$. If all $d_i$ are even, $H^2(\overline{\ms N_A},\G_m)=\Z/2\Z$, and if any $d_i$ is odd, $H^2(\overline{\ms N_A},\G_m)=0$. 
\end{theorem}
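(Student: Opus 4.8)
The plan is to realize $\overline{\ms N_A}$ as an iterated Brauer–Severi stack over $B\PGL_2$ and propagate the cohomology computation up this tower using the stacky Gabber theorem. First I would make precise the description alluded to in the text: a $g$-admissible datum $A=(0,N,d_1,\dots,d_{N-1})$ encodes, for each $i$, a divisor $D_i$ of degree $d_i$ on a genus $0$ curve $C$, which is the same data as a point of $\PP(\Sym^{d_i} W^\vee)$ where $W$ is the standard $2$-dimensional representation; globalizing over $B\PGL_2$, the stack $\overline{\ms N_A}$ should be (up to the line bundle/isomorphism data, which will contribute the gerbe but not change $\overline{\ms N_A}$ itself) the fiber product over $B\PGL_2$ of the projective bundles $[\PP(\Sym^{d_i} W^\vee)/\PGL_2]\to B\PGL_2$. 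So $\overline{\ms N_A}\to B\PGL_2$ is a composition of Brauer–Severi stacks, each of the form $[\PP V/\PGL_2]$ for a $\PGL_2$-representation $V$.

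Next I would compute $H^2(B\PGL_2,\G_m)$ and feed it into Gabber. Here $\PGL_2=\mathrm{PGL}_2$ is semisimple with universal cover $\SL_2$ and fundamental group $B=\mu_2$, so $X(B)=\Z/2\Z$; by Theorem~\ref{thm:Br(BG) (1)}, $H^2(B\PGL_2,\G_m)=\Z/2\Z$, with the nontrivial class represented by the pushforward of the class of $B\SL_2$ under $\mu_2\hookrightarrow\G_m$. Also $H^1(B\PGL_2,\G_m)=0$ and $H^0=k^*$. The stacky Gabber theorem says that for a Brauer–Severi stack $\ms X=[\PP V/G]\to S$, the pullback $H^2(S,\G_m)\to H^2(\ms X,\G_m)$ is surjective with kernel the cyclic subgroup generated by the Brauer class of $\ms X$; and the analogous statement should hold for $H^1$ and $H^0$ being preserved (a Brauer–Severi scheme has no new line bundles or units étale-locally, and the gerbe argument globalizes this). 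So at each stage of the tower I need to know the Brauer class of the Brauer–Severi stack $[\PP(\Sym^{d_i}W^\vee)/\PGL_2]\to B\PGL_2$.

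The key computation is therefore: what is the Brauer class of $[\PP(\Sym^{d_i}W^\vee)/\PGL_2]$ in $H^2(B\PGL_2,\G_m)=X(\mu_2)=\Z/2\Z$? By Theorem~\ref{thm: chi(BG)=ms G (1)}, this class is $\chi^{-1}$ where $\chi:\mu_2\to\G_m$ is the character by which $\mu_2\subseteq\SL_2$ acts on $\Sym^{d_i}W$ (equivalently $\Sym^{d_i}W^\vee$). The center $\mu_2=\{\pm 1\}\subset\SL_2$ acts on $W$ by $-1$, hence on $\Sym^{d_i}W$ by $(-1)^{d_i}$: so $\chi$ is trivial when $d_i$ is even and is the nontrivial character when $d_i$ is odd. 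Thus the Brauer class of the $i$-th Brauer–Severi stack in the tower is $0\in\Z/2\Z$ if $d_i$ is even and the nonzero element if $d_i$ is odd. Applying Gabber inductively: passing to $[\PP(\Sym^{d_i}W^\vee)/\PGL_2]$ over the current base kills the subgroup generated by this Brauer class. If every $d_i$ is even, every Brauer class in the tower is trivial, so $H^2$ is unchanged all the way up and $H^2(\overline{\ms N_A},\G_m)=H^2(B\PGL_2,\G_m)=\Z/2\Z$. If some $d_i$ is odd, then at that stage Gabber kills the nonzero element of $\Z/2\Z$, so $H^2$ becomes $0$, and once it is $0$ it stays $0$ under all further Brauer–Severi extensions (the kernel is a subgroup of the zero group and surjectivity forces $H^2=0$ upstairs). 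Hence $H^2(\overline{\ms N_A},\G_m)=0$.

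The main obstacle, and the step requiring the most care, is the bookkeeping needed to legitimately apply Gabber's theorem iteratively: I must check that each intermediate stack in the tower is genuinely a Brauer–Severi stack over the previous one (i.e.\ that base-changing $[\PP(\Sym^{d_j}W^\vee)/\PGL_2]\to B\PGL_2$ along an earlier stage in the tower still yields an étale-locally-trivial projective bundle, so that Gabber applies), and that the computation of its Brauer class is stable under this base change. Since all the projective bundles are pulled back from $B\PGL_2$ along the structure maps, their Brauer classes are the pullbacks of the classes computed above, and I would track them via the (surjective) pullback maps on $H^2$; the even/odd dichotomy of the generating elements is preserved because pullback is a group homomorphism. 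The only subtlety is ensuring that when $H^2$ of the base has already collapsed to $0$, the relevant Brauer class is of course $0$ and the induction terminates correctly, which is immediate. A secondary point to verify is that the line-bundle/isomorphism data $(\ms L,\phi)$ distinguishing $\overline{\ms M_A}$ from $\overline{\ms N_A}$ really does not enter the definition of $\overline{\ms N_A}$, so that $\overline{\ms N_A}$ is exactly the iterated symmetric-power bundle stack described above — this is a matter of unwinding the definitions in Section~\ref{section: The moduli spaces}.
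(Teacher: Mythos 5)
Your proposal is correct and follows essentially the same route as the paper: factor $\overline{\ms N_A}\to B\PGL_2$ as a tower of Brauer--Severi stacks $[\PP\Sym^{d_i}V/\PGL_2]$ pulled back from $B\PGL_2$, identify the Brauer class of each stage as $[d_i]\in H^2(B\PGL_2,\G_m)=\Z/2\Z$ via the central $\mu_2$ acting on $\Sym^{d_i}V$ by $(-1)^{d_i}$, and apply the stacky Gabber theorem inductively. The points you flag for care (stability of the classes under pullback along the tower, and that $\overline{\ms N_A}$ really is the iterated symmetric-power bundle with no contribution from the $(\ms L,\phi)$ data) are exactly the ones the paper handles in Proposition \ref{prop: genus 0 simplifications} and the factorization \ref{eq: factor gamma}.
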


\begin{theorem} \label{thm:M_A}
    Let $A=(0, N, d_1, \ldots, d_{N-1})$ be as above. Then $[\overline{\ms M_A}^{\wedge \G_m}]=[\gamma^*\ms G_{d/N}]\in H^2(\overline{\ms N_A},\G_m)$, where $\ms G_{d/N}$ is the $\G_m$-gerbe over $B\PGL_2$ of degree $d/N$ line bundles on genus 0 curves, and $\gamma:\overline{\ms N_A}\to B\PGL_2$ is the natural forgetful map. In particular, if all $d_i$ are even and $d/N$ is odd, then $\overline{\ms M_A}^{\wedge \G_m}$ is nontrivial of order 2, otherwise, $\overline{\ms M_A}^{\wedge \G_m}$ is trivial. Moreover, since the restriction map on Brauer groups is injective, the same holds for $\ms M_A^{\wedge \G_m}$ where the divisors are disjoint. 
\end{theorem}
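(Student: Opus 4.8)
The plan is to identify $\overline{\ms M_A}^{\wedge\G_m}$ with $\gamma^*\ms G_{d/N}$ as $\G_m$-gerbes over $\overline{\ms N_A}$, and then to compute the class of the latter from Theorems \ref{thm:Br(BG) (1)}, \ref{thm: chi(BG)=ms G (1)} and \ref{thm:N_A}. Recall that an object of $\overline{\ms M_A}(S)$ is a tuple $(f\colon C\to S, D_1,\dots,D_{N-1},\ms L,\phi)$ with $\phi\colon\ms L^{\otimes N}\xrightarrow{\sim}\ms O_C(\sum_i iD_i)$, that the structure map $\overline{\ms M_A}\to\overline{\ms N_A}$ forgets $(\ms L,\phi)$, and that over a fixed $(C,D_i)$ the fibre is the $\mu_N$-gerbe of $N$-th roots of $\ms O_C(\sum_i iD_i)$. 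Since $\deg\ms O_C(\sum_i iD_i)=\sum_i id_i=d$ and $N\mid d$, any such $\ms L$ has relative degree $d/N$ over $S$, so forgetting only the isomorphism $\phi$ defines a morphism over $\overline{\ms N_A}$
\[
F\colon\overline{\ms M_A}\longrightarrow\gamma^*\ms G_{d/N}=\overline{\ms N_A}\times_{B\PGL_2}\ms G_{d/N},\qquad (C,D_i,\ms L,\phi)\longmapsto(C,D_i,\ms L),
\]
whose effect on relative automorphism sheaves is the inclusion $\mu_N\hookrightarrow\G_m$: an automorphism of $\ms L$ over $S$ is multiplication by a unit (because $\pi_*\ms O_C=\ms O_S$ for a smooth genus $0$ curve), and such a scalar is compatible with $\phi$ exactly when it is an $N$-th root of unity. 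By the universal property of the pushout gerbe $\overline{\ms M_A}^{\wedge\G_m}$, $F$ factors through a morphism of $\G_m$-gerbes $\overline{\ms M_A}^{\wedge\G_m}\to\gamma^*\ms G_{d/N}$ which is the identity on bands, and any morphism of $\G_m$-gerbes over a fixed base that respects the bands is an equivalence; hence $\overline{\ms M_A}^{\wedge\G_m}\simeq\gamma^*\ms G_{d/N}$ and $[\overline{\ms M_A}^{\wedge\G_m}]=\gamma^*[\ms G_{d/N}]$ in $H^2(\overline{\ms N_A},\G_m)$. I expect this first step to be the only place requiring real care — verifying that $F$ is a morphism of gerbes over $\overline{\ms N_A}$ with band map exactly $\mu_N\hookrightarrow\G_m$, and keeping the degree bookkeeping straight so that one lands in $\ms G_{d/N}$ and not $\ms G_d$ or the like; everything afterward is assembly.

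To finish, I compute $\gamma^*[\ms G_{d/N}]$. The morphism $\ms G_1\to\ms G_{d/N}$, $(C,\ms L)\mapsto(C,\ms L^{\otimes d/N})$, is multiplication by $d/N$ on $\G_m$-bands, so $[\ms G_{d/N}]=(d/N)\,[\ms G_1]$ in $H^2(B\PGL_2,\G_m)$. By Theorem \ref{thm:Br(BG) (1)} applied to $G=\PGL_2$ (so $\widetilde G=\SL_2$ and $B=\mu_2$), $H^2(B\PGL_2,\G_m)=X(\mu_2)=\Z/2\Z$; and by Theorem \ref{thm: chi(BG)=ms G (1)} applied to the standard representation $V$ of $\SL_2$ — on which $\mu_2$ acts by the nontrivial character $\chi$, for which $[\PP V/\PGL_2]$ is the universal genus $0$ curve, and whose gerbe of degree $1$ line bundles is $\ms G_1$ — the class $[\ms G_1]=\chi^{-1}$ is the nonzero element of $\Z/2\Z$. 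Thus $[\ms G_{d/N}]\ne 0$ precisely when $d/N$ is odd. Now apply Theorem \ref{thm:N_A}: if some $d_i$ is odd then $H^2(\overline{\ms N_A},\G_m)=0$, so $[\overline{\ms M_A}^{\wedge\G_m}]=\gamma^*[\ms G_{d/N}]$ is trivial; if all $d_i$ are even then (by the argument proving Theorem \ref{thm:N_A}, where $\overline{\ms N_A}\to B\PGL_2$ is a tower of projective bundles) $\gamma^*$ is an isomorphism $\Z/2\Z\xrightarrow{\sim}\Z/2\Z$, so $[\overline{\ms M_A}^{\wedge\G_m}]$ is nonzero — hence of order $2$ — exactly when $d/N$ is odd. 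This proves the displayed identity and the ``in particular'' clause.

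For the last assertion, note that $\ms M_A$ is the preimage of the dense open substack $\ms N_A\subseteq\overline{\ms N_A}$ under $\overline{\ms M_A}\to\overline{\ms N_A}$, so $\ms M_A=\overline{\ms M_A}\times_{\overline{\ms N_A}}\ms N_A$ and the class $[\ms M_A^{\wedge\G_m}]$ is the restriction of $[\overline{\ms M_A}^{\wedge\G_m}]$ along $\ms N_A\hookrightarrow\overline{\ms N_A}$. Since $\overline{\ms N_A}$ is regular and $\ms N_A$ is dense open, the restriction map $H^2(\overline{\ms N_A},\G_m)\to H^2(\ms N_A,\G_m)$ is injective, so $[\ms M_A^{\wedge\G_m}]$ is trivial, respectively nontrivial of order $2$, exactly when $[\overline{\ms M_A}^{\wedge\G_m}]$ is.
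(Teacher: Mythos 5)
Your proof is correct and follows essentially the same route as the paper: the forgetful morphism $(C,D_i,\ms L,\phi)\mapsto((C,D_i,[\ms L]),\ms L)$ into $\gamma^*\ms G_{d/N}$ with band map $\mu_N\hookrightarrow\G_m$, the identification $[\ms G_{d/N}]=(d/N)[\ms G_1]\in\Z/2\Z$, Theorem \ref{thm:N_A} for the target group, and injectivity of restriction to the open locus of disjoint divisors. Your added details (the explicit check that automorphisms compatible with $\phi$ are exactly $\mu_N$, and the regularity/dense-open justification for injectivity of restriction) are correct elaborations of steps the paper leaves implicit.
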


\subsection{Acknowledgments} The author thanks her advisor, Martin Olsson, for many helpful conversations, and Hannah Larson for making her aware of \cite{Pagani_2013} and the moduli spaces $\ms M_A$. The author was partially supported by the Simons Collaboration on Perfection in Algebra, Geometry, and Topology.

\section{Brauer Group of $BG$} \label{section: Br(BG)}

Let $G$ a smooth connected semisimple linear algebraic group over an algebraically closed field $k$. Let $\widetilde{G}$ be the universal cover of $G$, and $B$ the fundamental group of $G$, which exist by \cite[Corollary 4.6]{FOSSUM1973269}. Then $\widetilde{G}$ is also a smooth connected semisimple linear algebraic group, $B$ is finite and diagonalizable, thus a finite abelian group, and we have an exact sequence
\[1\to B\to \widetilde{G}\to G\to 1,\]
with the image of $B$ in the center of $\widetilde{G}$. 

Let  
\[X(-): (\text{algebraic groups}/k)\to \Set \]
be $H\mapsto \Hom(H,\G_m)$, where $\Hom$ refers to homomorphisms of algebraic groups.

We may view $k^*\subseteq H^0(H, \ms O_H^*)=H^0(H,\G_m)$ as constant functions, and we have a surjection $H^0(H,\G_m)\to X(H)$, by $f\mapsto f/f(e)$, which gives us a split exact sequence 
\[0\to k^*\to H^0(H,\G_m)\to X(H)\to 0,\]
and thus $H^0(H,\G_m)=k^*\oplus X(H)$ (\cite[Theorem 3]{Rosenlicht}, \cite[Lemma 6.5 (iii)]{Sansuc1981GroupeDB}, \cite[Corollary 2.2]{FOSSUM1973269}). Since $G$ is semisimple, $X(G)=0$.

We have $\widetilde{G}\to G$ a $B$-torsor, and $B\widetilde{G}\to BG$ a $B$-gerbe. Given a character $\chi: B\to \G_m$ of $B$, we get pushforward maps 
$$\chi_*: H^1(G, B)\to H^1(G, \G_m), \ \ \chi_*:H^2(BG,B)\to H^2(BG,\G_m).$$ Thus we get morphisms
$$X(B)\to H^1(G,\G_m)=\Pic(G),\ \ X(B)\to H^2(BG,\G_m)=\Br(BG),$$

given by $(\chi:B\to \G_m)\mapsto \chi_*(\widetilde{G})$ and $(\chi:B\to \G_m)\mapsto \chi_*(B\widetilde{G})$, respectively.

It follows from \cite[Corollary 6.11, Remark 6.11.3]{Sansuc1981GroupeDB} that the first map is an isomorphism, also using the fact that $X(\widetilde{G})=0$ and $\Pic(\widetilde{G})=0$. That the second map is an isomorphism is the statement of Theorem \ref{thm:Br(BG) (1)}, rewritten below for the reader's convenience.

\begin{theorem} \label{thm: Br(BG) main} Let $k$ be an algebraically closed field, $G$ a smooth connected semisimple linear algebraic group over $k$, and $\widetilde{G}$ and $B$ as above. Then
\[ H^i(BG, \G_m)=  \left\{
\begin{array}{ll}
      k^* & i=0 \\
      0 & i=1 \\
      \widehat{B} & i=2. \\
\end{array} 
\right. \]
For $i=2$, an isomorphism $\widehat{B}\to H^2(BG,\G_m)$ is given by $(\chi:B\to \G_m)\mapsto \chi_*(B\widetilde{G})$, where $\chi_*$ is the pushforward map
$\chi_*:H^2(BG,B)\to H^2(BG,\G_m)$.
\end{theorem}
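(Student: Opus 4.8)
The plan is to compute $H^i(BG,\G_m)$ via the Leray–Cartan–Leray machinery attached to the $B$-gerbe $B\widetilde G\to BG$, bootstrapping from known facts about $\widetilde G$. First I would record the input data: since $\widetilde G$ is simply connected semisimple, $X(\widetilde G)=0$ and $\Pic(\widetilde G)=\Pic(B\widetilde G\times_{BG}\widetilde G\text{-torsor})$-type vanishing gives $\Pic(\widetilde G)=0$ (this is classical, e.g.\ via \cite{FOSSUM1973269, Sansuc1981GroupeDB}), and $\Br(\widetilde G)=0$ because $\widetilde G$ is a rational smooth variety over an algebraically closed field (it has a big open cell isomorphic to affine space, so $\Br$ vanishes). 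Likewise $H^0(\widetilde G,\G_m)=k^*$. The degrees $0$ and $1$ parts of the theorem then follow quickly: $H^0(BG,\G_m)=H^0(G,\G_m)^{G}$-descent gives $k^*\oplus X(G)=k^*$ since $X(G)=0$; and $H^1(BG,\G_m)=\Pic(BG)$ fits into the exact sequence coming from $\widetilde G\to G$ (a $B$-torsor) together with $\Pic(\widetilde G)=0$, identifying $\Pic(G)\cong X(B)$ as already noted, and then the further descent along $G\to BG$ (i.e.\ the simplicial resolution $[G^{\bullet}/G]$) kills it because $G$-equivariant line bundles on $G$ are trivial — more precisely $\Pic(BG)$ injects into $\Pic(G)=X(B)$ with image the $G$-equivariant (i.e.\ trivial-as-bundle) ones, which forces $\Pic(BG)=0$ for semisimple $G$ since characters of $G$ vanish. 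I would phrase this last step using the exact sequence $X(G)\to \Pic(BG)\to \Pic(G)$ and the fact that semisimplicity gives $X(G)=0$ while the generator of $\Pic(G)$ does not descend.

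The heart is $i=2$. I would use the five-term-type exact sequence (or the low-degree exact sequence of the Leray spectral sequence $H^p(BG, R^q\pi_*\G_m)\Rightarrow H^{p+q}(B\widetilde G,\G_m)$ for $\pi:B\widetilde G\to BG$), or equivalently the Cartan–Leray / Hochschild–Serre sequence for the gerbe. Since $\pi$ is a $B$-gerbe with $B$ diagonalizable finite, $R^0\pi_*\G_m=\G_m$ and $R^1\pi_*\G_m$ is the sheaf $X(B)$ (characters of the band), while $R^2\pi_*\G_m$ contains a copy of $H^2$ of the band, which for $B$ a product of $\mu_n$'s contributes nothing new beyond what is captured. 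Concretely, I would exploit the exact sequence
\[
H^2(BG,\G_m)\xrightarrow{\pi^*} H^2(B\widetilde G,\G_m)\to H^0(BG, R^2\pi_*\G_m)\xrightarrow{d} H^3(BG,\G_m)
\]
together with the edge map $H^2(BG,R^1\pi_*\G_m)=H^2(BG,X(B))\to H^2(B\widetilde G,\G_m)$ whose failure-to-be-injective/surjective is governed by $\Br(B\widetilde G)$ and $H^1(BG,\G_m)=0$. The key identification is $H^2(B\widetilde G,\G_m)=\Br(B\widetilde G)=X(\widetilde G)=0$ — wait, more precisely I need $\Br(B\widetilde G)=0$, which I would get by the same method applied one level down, using that $\widetilde G$ is simply connected so its "fundamental group" $\widetilde B$ is trivial, making $B\widetilde G\to BG$ the terminal step; i.e.\ $H^2(B\widetilde G,\G_m)=0$ because the analogous $X(\widetilde B)=0$. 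Granting $H^2(B\widetilde G,\G_m)=0$ and $H^1(B\widetilde G,\G_m)=\Pic(B\widetilde G)=0$, the spectral sequence collapses enough to yield an isomorphism $H^2(BG,\G_m)\cong H^2(BG, R^1\pi_*\G_m)=H^2(BG,\underline{X(B)})$, and since $X(B)$ is a \emph{constant} finite group on $BG$ (the band $B$ is central, so its character sheaf is constant) and $H^*(BG,\Z)$-type computations give $H^i(BG,A)=A$ for $i\le 2$ trivial coefficients when $G$ is connected (indeed $BG$ is simply connected and $H^2$ with finite constant coefficients is $A$ via $H^2(BG,\mu_n)$ and Kummer, using $\Pic(BG)=0$), this is exactly $X(B)$. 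I would then unwind the identification to check that the composite $X(B)\to H^2(BG,\G_m)$ agrees with $\chi\mapsto\chi_*(B\widetilde G)$: this is just naturality of the pushforward along $\chi:B\to\G_m$ applied to the class $[B\widetilde G]\in H^2(BG,B)$, which is the image under the boundary map of the exact sequence $1\to B\to\widetilde G\to G\to 1$.

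The main obstacle will be establishing the two vanishing statements that power the collapse, namely $\Pic(B\widetilde G)=0$ and $\Br(B\widetilde G)=H^2(B\widetilde G,\G_m)=0$ for $\widetilde G$ simply connected semisimple. The first is essentially the statement that every $\widetilde G$-equivariant line bundle on a point, equivalently every character of $\widetilde G$, is trivial — but one must be careful to also rule out nontrivial line bundles on the stack $B\widetilde G$ that are not captured by characters, which reduces to $\Pic(\widetilde G)=0$ (true, Iversen/Fossum–Iversen) plus a descent argument. The second is the genuinely delicate point: I expect to prove $H^2(B\widetilde G,\G_m)=0$ by the same Leray argument I am trying to run for $G$, but now the gerbe $B\widetilde{\widetilde G}\to B\widetilde G$ is trivial (since $\widetilde G$ is its own universal cover, the "fundamental group" is trivial), so the argument terminates and gives $H^2(B\widetilde G,\G_m)\cong H^2(B\widetilde G, \underline{0})=0$ — however, I must independently know $\Br(\widetilde G)=0$ as a variety to seed this, which holds because $\widetilde G$ contains a dense open affine space (Bruhat cell) and $\Br$ is a birational invariant of smooth varieties that vanishes on $\mathbb A^n_k$. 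An alternative, and perhaps cleaner, route for the $i=2$ step is to use the exact sequence of the $B$-torsor $\widetilde G\to G$ directly to get $\Br(G)$ in terms of $X(B)$ and $\Br(\widetilde G)=0$, and then a second descent along $G\to BG$; but handling the equivariant/gerbe terms carefully is where the real work lies, so I would present the gerbe-theoretic version with the two vanishing lemmas isolated as the technical core.
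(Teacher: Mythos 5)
Your overall strategy --- reduce to $\widetilde G$ via the $B$-gerbe $\pi\colon B\widetilde G\to BG$ and the low-degree exact sequence of its Leray spectral sequence --- has a genuine circularity at its core. The entire $i=2$ computation rests on the vanishing $H^2(B\widetilde G,\G_m)=0$, and you never actually establish it. Your proposed bootstrap (``run the same argument one level down, where the gerbe is trivial'') degenerates to a tautology: for $\widetilde G$ the relevant gerbe is the identity $B\widetilde G\to B\widetilde G$, whose spectral sequence returns $H^2(B\widetilde G,\G_m)=H^2(B\widetilde G,\G_m)$ and gives no information. The fallback seed $\Br(\widetilde G)=0$ does not transfer from the variety $\widetilde G$ to the stack $B\widetilde G$ without a further descent argument, and your justification of it is also incorrect: the big Bruhat cell of $\widetilde G$ is $\mathbb{A}^N\times\G_m^r$ with $r$ the rank, not affine space, and $\Br(\G_m^r)\neq 0$ for $r\geq 2$, so ``birational invariance plus the big cell'' does not apply (the injection $\Br(\widetilde G)\hookrightarrow\Br(k(\widetilde G))$ into the huge Brauer group of the function field gives nothing). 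The paper sidesteps all of this by computing with the descent spectral sequence of the atlas $\Spec k\to BG$, with $U_p\cong G^p$: the only inputs are $H^0(G^p,\G_m)=k^*$ (semisimplicity kills characters), $\Pic(G)\cong X(B)$, and the vanishing of $d^{1,1}$ on $\Pic(G)$ because characters are homomorphisms; the $q=2$ row contributes nothing since $E_2^{0,2}=\ker(\Br(k)\to\Br(G))=0$, so one never needs $\Br(G)$ or $\Br(B\widetilde G)$ as input. If you insist on the gerbe-theoretic route, you must first prove $H^1(B\widetilde G,\G_m)=H^2(B\widetilde G,\G_m)=0$ by exactly this kind of atlas descent applied to $\widetilde G$ --- at which point the same argument applied directly to $G$ already proves the theorem.

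There is also a concrete error in which $E_2$-term you extract: granting $H^1(B\widetilde G,\G_m)=H^2(B\widetilde G,\G_m)=0$, the five-term sequence makes the transgression $d_2\colon E_2^{0,1}\to E_2^{2,0}$ an isomorphism, i.e.\ $H^2(BG,\G_m)\cong H^0(BG,R^1\pi_*\G_m)=X(B)$, not $H^2(BG,R^1\pi_*\G_m)$. The claim you then invoke to evaluate the latter --- that $H^2(BG,\underline{A})=A$ for finite constant $A$ when $G$ is connected --- is false and in fact circular: for semisimple $G$ one has $H^2(BG,\mu_n)\cong X(B)[n]$, which is precisely what the theorem computes (it equals $(\Z/2)^2$, not $\Z/2$, for adjoint groups of type $D_{2k}$). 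Once the term is corrected to $H^0$, the evaluation is immediate from connectedness of $BG$ and none of that is needed. Finally, ``this is just naturality of pushforward'' elides the real content of matching the abstract isomorphism with $\chi\mapsto\chi_*(B\widetilde G)$; the paper instead proves bijectivity of that explicit map directly, getting surjectivity from the fact that any $\G_m$-gerbe on $BG$ trivializes after pullback to $B\widetilde G$, and injectivity from an $\uIsom$-torsor argument comparing the induced maps on stabilizers.
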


\begin{proof}
We use the spectral sequence $E_1^{p,q}=H^q(U_p,\G_m)\Rightarrow H^{p+q}(BG,\G_m)$ of the standard simplicial cover $U_\bullet\to BG$ (\cite[5.2.1.1]{Deligne1974HodgeIII}) coming from $\Spec k\to BG$.

Here, $U_0=\Spec k$, and $U_p=U_{p-1}\times_{BG} \Spec k$ for all $p\geq 1$. We have $p+1$ projection maps $U_p\to U_{p-1},$
each of which project onto all but one copy of $\Spec k$. For all $p\geq 1$, $U_p\cong G^p$. In particular, $U_1=\Spec k\times_{BG} \Spec k \cong G$ and $U_2\cong G\times G$. The two projections 
\[q_1, q_2: \Spec k \times_{BG} \Spec k\to \Spec k\]
are both given by the structure map $\pi: G\to \Spec k$. The three projections
\[p_{23}, p_{13}, p_{12}: \Spec k \times_{BG} \Spec k\times_{BG} \Spec k \to \Spec k\times_{BG}\Spec k\]
are given by 
\[p_2, m, p_1: G\times G\to G,\]
respectively. 
The simplicial cover $U_\bullet \to BG$ can then be written
\[U_\bullet: 
\begin{tikzcd}
\cdots 
    \arrow[r, shift left=3]
   \arrow[r, shift left=1]
    \arrow[r,shift right=1]
    \arrow[r, shift right=3] &
G\times G 
    \arrow[r,shift left=2]
    \arrow[r]
    \arrow[r, shift right=2]
& G   
    \arrow[r, shift left=1]
    \arrow[r,shift right=1]
& \Spec k.    
\end{tikzcd} 
\]
We may apply the functors $H^q(-,\G_m)$ to $U_\bullet$ to get 
\[H^q(U_\bullet ,\G_m):H^q(\Spec k, \G_m)\xrightarrow{d^{0,q}}H^q(G,\G_m) \xrightarrow{d^{1,q}} H^q(G\times G,\G_m)\xrightarrow{} \cdots \]
where the differentials $d^{p,q}$ are given by the alternating sums of the maps induced by the projection maps $U_{p+1}\to U_p$. The complexes $H^q(U_\bullet ,\G_m)$ make up the rows of the first page of the spectral sequence.
   
We write out some low-degree terms on the $E_1$ page:

        \begin{center}
\begin{tikzpicture}[scale=1.5, every node/.style={font=\small}]
  \draw[->] (-0.5, 0) -- (6.25, 0) node[anchor=west] {$p$};
  \draw[->] (0, -0.5) -- (0, 2.5) node[anchor=south] {$q$};

  \node at (0.25,0.25) {$k^*$};
  \node at (1.75,0.25) {$H^0(G,\G_m)$};
  \node at (3.75, 0.25) {$H^0(G\times G, \G_m)$};
  \node at (5.75, 0.25) {$\cdots$};
  \node at (0.25,1.25) {$0$};
  \node at (1.75,1.25) {$\Pic(G)$};
  \node at (3.75, 1.25) {$\Pic(G\times G)$};
  \node at (5.75, 1.25) {$\cdots$};
  \node at (0.25, 2.25) {0};
  \node at (1.75, 2.25) {$\cdots$};

  \draw[->] (0.45,0.25) -- (1.1,0.25);
  \draw[->] (2.35,0.25) -- (2.9, 0.25);
  \draw[->] (4.6, 0.25) -- (5.4, 0.25);
  \draw[->] (0.45, 1.25) -- (1.1,1.25);
  \draw[->] (2.35, 1.25) -- ( 2.9, 1.25);
  \draw[->] (4.6, 1.25) -- (5.4, 1.25);
  \draw[->] (0.45, 2.25) -- (1.1, 2.25);
  
\end{tikzpicture}
\end{center}
 
    Note that since $k$ is algebraically closed, $\Br(k)=\Pic(k)=0$. Note also that for all $q$, $d^{0,q}$ is the alternating sum of two copies of 
    $\pi^*: H^q(\Spec k, \G_m)\to H^q(G,\G_m)$, and thus $d^{0,q}=0$. 

    We first compute the cohomology of the bottom line, $H^p(E_1^{\bullet, 0})$ to get the bottom line of the $E_2$ page. Using the fact that $H^0(G^p,\G_m)=k^*\oplus X(G^p)=k^*$ is the set of constant functions on $G^p$ for all $p\geq 1$, we see that the pullback along any projection map $G^{p+1}\to G^{p}$ is the identity map $k^*\to k^*$. Thus, for all $p\geq 1$, $d^{p,0}$ is the alternating sum of $p+2$ copies of $\id: k^*\to k^*$, so is given by the identity map for $p$ odd and the zero map for $p$ even. For example, $d^{1,0}: H^0(G, \G_m)\to H^0(G\times G,\G_m)$ is given by $p_2^*-m^*+p_1^*$, which is the identity map $k^*\to k^*$, since each term in the sum is the identity map.

    Thus $H^p(E_1^{\bullet, 0})$ is the cohomology of
    \[\begin{tikzcd}
        k^* \arrow{r}{0} & k^* \arrow{r}{\id} & k^* \arrow{r}{0} & \cdots,
    \end{tikzcd}\]
    so $H^0(E_1^{\bullet, 0})= k^*$, and $H^p(E_1^{\bullet, 0})=0$ for $p\geq1$. 

    Next, we compute the cohomology of the middle line, $H^p(E_1^{\bullet, 1})$. We must understand the three pullback maps 
    \[p_1^*, p_2^*, m^*: \Pic(G)=X(B)\to \Pic(G\times G)=X(B\times B).\]
    Let $\chi\in X(B)$ correspond to $\chi_*\widetilde{G}\in \Pic(G)$. Write $\rho:\widetilde{G}\to G$ and $\rho':\chi_*\widetilde{G}\to G$. Then in $\Pic(G\times G)$,
    \[p_1^*(\chi_*\widetilde{G})=\chi_*\widetilde{G}\times_{G, p_1} (G\times G)\] 
    \[ p_2^*(\chi_*\widetilde{G})=\chi_*\widetilde{G}\times_{G, p_2} (G\times G), \]
    \[ m^*(\chi_*\widetilde{G})=\chi_*\widetilde{G}\times_{G, m} (G\times G)\]
    Each of the above is a $\G_m$-torsor over $G\times G$, which we can write $\chi_*\widetilde{G}\times G\to G\times G$ with the following structure maps,
    \[p_1^*(\chi_*\widetilde{G})=\chi_*\widetilde{G}\times G\to G\times G, \ (x,g)\mapsto (\rho'(x),g),\]
    \[p_2^*(\chi_*\widetilde{G})=\chi_*\widetilde{G}\times G\to G\times G, \ (x,g)\mapsto (g,\rho'(x)),\]
    \[m^*(\chi_*\widetilde{G})=\chi_*\widetilde{G}\times G\to G\times G, \ (x,g)\mapsto (g,g^{-1}\rho'(x)).\]

    Let $p_1^*\chi, p_2^*\chi, m^*\chi\in X(B\times B)$ correspond to $p_1^*(\chi_*\widetilde{G}), p_2^*(\chi_*\widetilde{G}), m^*(\chi_*\widetilde{G})$, respectively.

    These maps are given as follows,
    \[p_1^*\chi=\chi\circ p_1:  (b_1, b_2)\mapsto \chi(b_1),\]
    \[p_2^*\chi=\chi\circ p_2:  (b_1, b_2)\mapsto \chi(b_2),\]
    \[m^*\chi=\chi\circ m:  (b_1, b_2)\mapsto \chi(b_1b_2),\]
    since they pushout the $B\times B$-torsor $\widetilde{G}\times \widetilde{G}\to G\times G$ to the $\G_m$-torsors $p_1^*(\chi_*\widetilde{G}), p_2^*(\chi_*\widetilde{G}),m^*(\chi_*\widetilde{G}),$ respectively.
        
    Then $d^{1,1}: X(B)\to X(B\times B)$ is given by 
    \[d^{1,1}:\chi\mapsto p_2^*\chi-m^*\chi+p_1^*\chi:(b_1,b_2)\mapsto \chi(b_2)-\chi(b_1b_2)+\chi(b_1)=0,\]
    since $\chi$ is a homomorphism. Thus $H^p(E_1^{\bullet, 1})$ is given by the cohomology of the sequence 
    \[\begin{tikzcd} 0 \arrow{r} & X(B) \arrow{r}{0} & X(B\times B) \end{tikzcd}\]

    This gives $H^0(E_1^{\bullet, 1})=0$, $H^1(E_1^{\bullet, 1})=X(B)$.

    We can now write out the $E_2$ page, 
    \begin{center}
\begin{tikzpicture}[scale=1.5, every node/.style={font=\small}]
  \draw[->] (-0.5, 0) -- (3.5, 0) node[anchor=west] {$p$};
  \draw[->] (0, -0.5) -- (0, 2.5) node[anchor=south] {$q$};

  \node at (0.25,0.25) {$k^*$};
  \node at (1.25,0.25) {$0$};
  \node at (2.25, 0.25) {0};
  \node at (3.25, 0.25) {0};
  \node at (0.25,1.25) {$0$};
  \node at (1.25,1.25) {$X(B)$};
  \node at (0.25, 2.25) {0};
  \node at (1.25, 2.25) {$\cdots$};

  \draw[->] (0.5,1.1) -- (2.1,0.3);
  \draw[->] (1.5,1.1) -- (3.1, 0.3);
  
\end{tikzpicture}
\end{center}

    from which we can read off $H^i(BG, \G_m)$ for $i=0,1,2$. This concludes the proof of the first statement.

    We now show that the map 
    \[X(B)\to H^2(BG,\G_m), \ \chi\mapsto \chi_*(B\widetilde{G})\] 
    is an isomorphism. Let $\ms H\to BG$ be a $\G_m$-gerbe. Take the fiber product with $B\widetilde{G}\to BG$ to get $\ms H\times_{BG} B\widetilde{G}\to B\widetilde{G}$, a $\G_m$-gerbe over $B\widetilde{G}$. By the first part of the proof, $H^2(B\widetilde{G},\G_m)=0$ since the fundamental group of $\widetilde{G}$ is 0. Thus, our $\G_m$-gerbe $\ms H\times_{BG} B\widetilde{G}\to B\widetilde{G}$ is trivial, and thus has a section. Composing the section with the projection to $\ms H$ gives a map $B\widetilde{G}\to \ms H$. The map on stabilizers over $BG$ is a map $\chi: B\to \G_m$ such that $\chi_*(B\widetilde{G})=\ms H$. Thus the map is surjective. 

    Now let $\chi_1:B\to \G_m$ and $\chi_2:B\to \G_m$ be two characters such that $\ms H_1=\chi_{1*}(B\widetilde{G})$ is isomorphic to $\ms H_2=\chi_{2*}(B\widetilde{G})$. That is, there is an  isomorphism of $\G_m$-gerbes $\sigma:\ms H_1\to \ms H_2$ on $BG$ and an isomorphism 
    \[\tau:(\ms H_1\to BG)\cong (\ms H_1\xrightarrow{\sigma}\ms H_2 \to BG).\]
    From $\chi_1$ and $\chi_2$, we get morphisms $\phi_1: B\widetilde{G}\to \ms H_1$ and $\phi_2: B\widetilde{G}\to \ms H_2$ whose morphisms on stabilizers are $\chi_1$ and $\chi_2$ respectively. This defines $\phi_1\times \phi_2: B\widetilde{G}\to \ms H_1\times_{BG} \ms H_2$.

    We have the following equivalence of categories, 
    \[\ms H_1\times_{BG} \ms H_2\to \ms H_1\times B\G_m,\]
    where an equivalence is given by sending 
    \[\left(h_1:S\to \ms H_1, h_2:S\to \ms H_2,\alpha:(S\xrightarrow{h_1} \ms H_1\to BG)\cong (S\xrightarrow{h_2}\ms H_2\to BG)\right)\]
    to $\left(h_1, \uIsom_{\ms H_2}(\sigma\circ h_1, h_2)\right)$, where $\uIsom_{\ms H_2}(\sigma\circ h_1,h_2):(\Sch/S)^\op \to \Set$ is a $\G_m$-torsor defined as follows:
    \[(f:S'\to S)\mapsto \left\{\eta:\left(S'\xrightarrow{f}S\xrightarrow{h_1}\ms H_1 \xrightarrow{\sigma}\ms H_2\right)\cong \left(S'\xrightarrow{f}S\xrightarrow{h_2}\ms H_2\right) \middle|  (\ms H_2\to BG)\circ \eta=f^*(\alpha\circ h_1^*\tau^{-1})  \right\},\]
    where $\alpha\circ h_1^*\tau^{-1}$ is the composition of the maps
    \[\left(S\xrightarrow{h_1} \ms H_1 \xrightarrow{\sigma} \ms H_2 \to BG\right) \xrightarrow{h_1^*\tau^{-1}}\left(S\xrightarrow{h_1} \ms H_1\to BG\right)\xrightarrow{\alpha}\left(S\xrightarrow{h_2}\ms H_2 \to BG\right).\]
    The $\G_m$-torsor $\uIsom_{\ms H_2}(\sigma\circ h_1,h_2)$ can be thought of as the torsor of isomorphisms between $h_1$ and $h_2$ whose image in $BG$ is given by $\alpha$.

    A morphism in $\ms H_1\times_{BG}\ms H_2$ is a pair 
    \[(\beta_1,\beta_2):(h_1,h_2,\alpha)\to (h_1',h_2',\alpha'),\]
    where $\beta_1:h_1\to h_1'$ and $\beta_2:h_2\to h_2'$ are isomorphisms such that the diagram
    \begin{equation} \label{eq: isom diagram}
    \begin{tikzcd}
        \overline{h_1} \arrow{r}{\alpha} \arrow{d}[swap]{\overline{\beta_1}} & \overline{h_2} \arrow{d}{\overline{\beta_2}} \\ \overline{h_1'} \arrow{r}{\alpha'} & \overline{h_2'}
    \end{tikzcd}
    \end{equation}
    commutes, where the bar denotes composition with the appropriate map to $BG$. 

    A morphism $(\beta_1,\beta_2)$ in $\ms H_1\times_{BG} \ms H_2$ is sent to $(\beta_1,\gamma)$ in $\ms H_1\times B\G_m$, where \[\gamma: \uIsom_{\ms H_2}(\sigma\circ h_1,h_2)
    \to \uIsom_{\ms H_2}(\sigma\circ h_1',h_2')\]
    is given by 
    \[(\eta: f^*(\sigma\circ h_1)\cong f^*h_2)\mapsto \eta':=f^*\beta_2\circ \eta\circ f^*(\sigma\circ \beta_1)^{-1}.\]
    Note that 
    \begin{align*}
        \overline{\eta'}&= f^*\overline{\beta_2}\circ \overline{\eta}\circ f^*(\overline{\sigma\circ \beta})^{-1} \\ 
        &=f^*\overline{\beta_2}\circ f^*(\alpha\circ h_1^*\tau^{-1})\circ f^*(\overline{\sigma\circ \beta})^{-1} \\
        &=f^*\overline{\beta_2}\circ f^*\alpha\circ f^*h_1^*\tau^{-1}\circ f^*(\overline{\sigma\circ \beta})^{-1} \\
        &=f^*\overline{\beta_2}\circ f^*\alpha\circ f^* \overline{\beta_1}^{-1} \circ  f^*h_1'^*\tau^{-1} \\
        &=f^*\alpha' \circ f^*h_1'^*\tau^{-1}
    \end{align*} due to the commutativity of the diagram \ref{eq: isom diagram}, and functoriality of $\tau$. 

    We show that the map is fully faithful and essentially surjective. let $(h_1, h_2, \alpha), (h_1',h_2',\alpha')\in \ms H_1\times_{BG}\ms H_2$. Given a morphism $\beta:h_1\to h_1'$ and $\gamma:\uIsom_{\ms H_2}(\sigma\circ h_1,h_2)\to \uIsom_{\ms H_2}(\sigma\circ h_1',h_2')$, we may set $\beta_1=\beta$, and construct $\beta_2$ locally as follows. Choose a cover $f:S'\to S$ where $\uIsom_{\ms H_2}(\sigma\circ h_1,h_2)$ has a section $\eta:f^*(\sigma\circ h_1)\to f^*h_2$. We may define $\beta_2$ locally by $\gamma(\eta)\circ f^*(\sigma\circ \beta) \circ \eta^{-1}$. Then by descent on $\ms H_2$, there exists a unique morphism $\beta_2: h_2\to h_2'$. Thus the map is fully faithful.

    For essential surjectivity, let $(h:S\to \ms H_1, T: S\to B\G_m)\in\ms H_1\times B\G_m$. Let $f:S'\to S$ be a cover of $S$ where $T$ becomes the trivial torsor, $T|_{S'}=S'\times \G_m$. Then $\left(f^*h, f^*(\sigma\circ h), f^*h^*\tau:f^*\overline{h}\to f^*\overline{(\sigma\circ h})\right)$ maps to $(f^*h, S'\times \G_m)$ since $\uIsom_{\ms H_2}(f^*(\sigma\circ h), f^*(\sigma\circ h))$ has a section, namely the identity map. 

    We may now use our equivalence of categories and the map $\phi_1\times \phi_2:B\widetilde{G}\to \ms H_1\times_{BG} \ms H_2$ to get a map $B\widetilde{G} \to \ms H_1\times B\G_m$. By the calculation in the first part of the proof, $H^1(B\widetilde{G}, \G_m)=0$, so the map is given by some $\phi: B\widetilde{G} \to \ms H_1$ and the trivial torsor $B\widetilde{G}\times \G_m$. Since the image consists of the trivial torsor, this tells us that $\uIsom_{\ms H_2}(\sigma\circ \phi_1,\phi_2)$ has a section over $B\widetilde{G}$, say $\eta: (B\widetilde{G}\xrightarrow{\phi_1}\ms H_1 \xrightarrow{\sigma} \ms H_2)\to (B\widetilde{G}\xrightarrow{\phi_2}\ms H_2).$ 

    Recall that $\chi_1,\chi_2:B\to \G_m$ describe the map on stabilizers over $BG$ of $\phi_1: B\widetilde{G}\to \ms H_1$ and $\phi_2:B\widetilde{G}\to \ms H_2$, respectively. Let $b\in B$ and $T:S\to B\widetilde{G}$, and view $b: T\cong T$ as an isomorphism in $B\widetilde{G}$ over $BG$. Then $\chi_1(b): \phi_{1}(T)\to \phi_{1}(T)$ and $\chi_2(b): \phi_{2}(T)\to \phi_{2}(T)$ are isomorphisms in $\ms H_1$ and $\ms H_2$ over $BG$, respectively. The following diagram commutes,
    \[\begin{tikzcd}
        \phi_{1}(T) \arrow{r}{\sigma} \arrow{d}[swap]{\chi_1(b)} & (\sigma\circ \phi_1)(T) \arrow{r}{\eta} & \phi_{2}(T) \arrow{d}{\chi_2(b)} \\ \phi_{1}(T)\arrow{r}{\sigma} & (\sigma\circ \phi_1)(T) \arrow{r}{\eta} & \phi_{2}(T),
    \end{tikzcd}\]
    with $\eta$ and  $\sigma$ isomorphisms. Then, since $\G_m$ is abelian, we conclude that $\chi_1(b)=\chi_2(b)$, and thus our map $X(B)\to H^2(BG,\G_m)$ is injective.
\end{proof}

\begin{remark}
    One could also formally use the spectral sequence to determine the isomorphism $X(B)\to H^2(BG,\G_m)$, but we have shown directly that the given map is an isomorphism. 
\end{remark}

\section{Brauer-Severi stacks over $BG$} \label{sec: Br-Sev stacks}
\begin{definition}
    Let $S$ be an algebraic stack. A \textit{Brauer-Severi stack of relative dimension $n$ over $S$} is a smooth, proper, morphism $\pi: \ms X\to S$ of relative dimension $n$ such that there exists an \'etale cover $S'\to S$ where $\ms X_{S'}\cong \PP^n_{S'}$. 
\end{definition}

\begin{remark}
    A smooth, proper morphism $\pi:\ms X\to S$ such that for every geometric point $s:\Spec k\to S$, $\ms X_s\cong \PP^n_{k}$, is a Brauer-Severi stack.
\end{remark}

The following two propositions provide equivalent ways to view Brauer-Severi stacks in general, or Brauer-Severi stacks that are parameterized by a vector bundle. 
\begin{proposition} \label{prop: br-sev eq pgl}
    There is an equivalence of categories between Brauer-Severi stacks of relative dimension $n$ over $S$ and $\PGL_{n+1}$-torsors over $S$. 
\end{proposition}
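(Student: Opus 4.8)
\textit{Proof sketch (proposal).}
The plan is to produce mutually quasi-inverse functors in both directions, all of whose relevant properties can be verified after an \'etale base change on $S$, where the assertion reduces to the classical one. The essential input is the identification $\uAut_T(\PP^n_T)=\PGL_{n+1,T}$ of group schemes over an arbitrary base $T$: an automorphism of $\PP^n_T$ over $T$ pulls $\ms O(1)$ back to $\ms O(1)$ twisted by a line bundle on $T$, hence acts $T$-linearly on the rank-$(n+1)$ bundle $\pi_*\ms O_{\PP^n_T}(1)$ up to scalars, and conversely; this represents $\uAut_T(\PP^n_T)$ by $\PGL_{n+1,T}$. I would cite this standard fact rather than reprove it. In particular $\PP^n$ carries a left $\PGL_{n+1}$-action.

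For the first functor, to a $\PGL_{n+1}$-torsor $P\to S$ I associate the contracted product $\ms X_P:=P\times^{\PGL_{n+1}}\PP^n=[P\times\PP^n/\PGL_{n+1}]$ for the diagonal action. Since $P$ is a torsor, the structure map $\ms X_P\to S$ is representable, and over an \'etale cover $S'\to S$ trivializing $P$ one gets $\ms X_P\times_S S'\cong\PP^n_{S'}$; smoothness, properness and relative dimension $n$ then descend from $S'$, so $\ms X_P\to S$ is a Brauer--Severi stack of relative dimension $n$. A morphism of $\PGL_{n+1}$-torsors is automatically an isomorphism and induces an $S$-isomorphism $\ms X_P\xrightarrow{\sim}\ms X_{P'}$, so this is functorial.

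For the second functor, to a Brauer--Severi stack $\pi:\ms X\to S$ I associate the sheaf $\ms P_{\ms X}:=\uIsom_S(\PP^n_S,\ms X)$ on (the big \'etale site of) $S$, whose $T$-points are the $T$-isomorphisms $\PP^n_T\xrightarrow{\sim}\ms X_T$. The group $\uAut_S(\PP^n_S)=\PGL_{n+1}$ acts on $\ms P_{\ms X}$ by precomposition, making it a pseudo-torsor, and over the cover $S'$ where $\ms X\cong\PP^n_{S'}$ it acquires a section, so it is a $\PGL_{n+1}$-torsor; being a torsor under the smooth affine group $\PGL_{n+1}$, it is representable by an algebraic stack affine over $S$. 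An $S$-isomorphism $\ms X\xrightarrow{\sim}\ms X'$ of Brauer--Severi stacks induces a $\PGL_{n+1}$-equivariant isomorphism $\ms P_{\ms X}\xrightarrow{\sim}\ms P_{\ms X'}$ by postcomposition, giving functoriality.

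Finally I would check that the two composites are naturally isomorphic to the identity functors. The evaluation $[\phi,x]\mapsto\phi(x)$ defines an $S$-morphism $\ms P_{\ms X}\times^{\PGL_{n+1}}\PP^n\to\ms X$, and the orbit map $p\mapsto(x\mapsto[p,x])$ defines a morphism $P\to\uIsom_S(\PP^n_S,\ms X_P)$; both become the tautological identifications after the \'etale base change trivializing $\ms X$, respectively $P$, hence are isomorphisms, and they are compatible with the morphisms above. This yields the desired equivalence of categories. I expect no serious obstacle here: the only real content beyond $\uAut(\PP^n)=\PGL_{n+1}$ is bookkeeping, namely checking that $\uIsom_S(\PP^n_S,\ms X)$ is a sheaf, that the contracted product is an algebraic stack with representable structure morphism, and that the $2$-categorical coherences are satisfied; once these are in place everything is \'etale-local and routine.
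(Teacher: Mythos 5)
Your proposal is correct and follows essentially the same route as the paper: the equivalence is implemented by the Isom-sheaf torsor (the paper uses $T\mapsto\{\sigma:\ms X_T\cong\PP^n_T\}$ with $\PGL_{n+1}$ acting by composition, which is your $\ms P_{\ms X}$ up to inverting the isomorphisms). The paper only writes down this one direction and leaves the quasi-inverse implicit, whereas you supply it explicitly via the contracted product $P\times^{\PGL_{n+1}}\PP^n$ and check the two composites; this is a more complete write-up of the same argument, not a different one.
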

\begin{proof}
    Given a Brauer-Severi stack $\pi: \ms X\to S$, we define a $\PGL_{n+1}$-torsor $I_{\ms X}$ on $S$ to be 
\[I_{\ms X}(T\to S)=\{ \text{isomorphisms } \sigma: \ms X\times_S T\cong \mathbb{P}^n_T\},\]
where $\alpha\in \PGL_{n+1}(T)=\Aut(\PP^n_T)$ acts by composition with $\sigma$. The equivalence is given by sending $\pi:\ms X\to S$ to $I_{\ms X}$, and an isomorphism of Brauer-Severi stacks $\phi:\ms X\to \ms X'$ is sent to $\phi_*: I_{\ms X}\to I_{\ms X'}$, given by 
\[(\sigma:\ms X\times_S T\cong \PP^n_T)\mapsto (\sigma\circ(\phi^{-1}\times \id):\ms X'\times_S T\cong \ms X\times_ST \cong \PP^n_T). \qedhere\]
\end{proof}

\begin{example} \label{ex: M_0=BPGL_2}
    For $n=1$, this is the statement that the moduli stack of genus 0 curves $\ms M_0$ is isomorphic to $B\PGL_2$. 
\end{example}
\begin{proposition} \label{prop: br sev eq gl_n}
    There is an equivalence of categories between rank $n+1$-vector bundles on $S$, $\GL_{n+1}$-torsors over $S$, and Brauer-Severi stacks of relative dimension $n$ together with a degree 1 line bundle over $S$. 
\end{proposition}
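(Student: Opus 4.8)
The plan is to combine a standard descent equivalence with an explicit pair of functors given by projectivization and pushforward. Write $\mc C_1$ for the groupoid of rank $n+1$ vector bundles on $S$ (with isomorphisms as morphisms), $\mc C_2$ for the groupoid of $\GL_{n+1}$-torsors on $S$, and $\mc C_3$ for the groupoid whose objects are pairs $(\pi:\ms X\to S,\ms L)$ with $\ms X\to S$ a Brauer--Severi stack of relative dimension $n$ and $\ms L$ a line bundle on $\ms X$ restricting to $\ms O_{\PP^n}(1)$ on every geometric fiber, and whose morphisms $(\ms X,\ms L)\to(\ms X',\ms L')$ are pairs $(\phi,\psi)$ with $\phi:\ms X\xrightarrow{\sim}\ms X'$ an isomorphism over $S$ and $\psi:\phi^*\ms L'\xrightarrow{\sim}\ms L$ an isomorphism of line bundles (this is the notion of morphism forced by the equivalence, since it makes $\uAut$ of the standard object $\GL_{n+1}$; see the last paragraph). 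The equivalence $\mc C_1\simeq\mc C_2$ is the usual one: send $V$ to its frame bundle $\uIsom_{\ms O_S}(\ms O_S^{\oplus(n+1)},V)$, and a $\GL_{n+1}$-torsor $P$ to the associated bundle $P\times^{\GL_{n+1}}\bbA^{n+1}$; these are quasi-inverse by Zariski descent for quasi-coherent sheaves. It remains to produce an equivalence $\mc C_1\simeq\mc C_3$ and compose.

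In one direction, send $V\in\mc C_1$ to $(\PP V\to S,\ms O_{\PP V}(1))$, with the convention that $\PP V$ represents line-bundle quotients of $V$, so that $\pi_*\ms O_{\PP V}(1)=V$ canonically; an isomorphism $\alpha:V\xrightarrow{\sim}V'$ induces $\PP\alpha:\PP V\xrightarrow{\sim}\PP V'$ together with the tautological identification $(\PP\alpha)^*\ms O_{\PP V'}(1)\cong\ms O_{\PP V}(1)$. In the other direction, send $(\ms X\xrightarrow{\pi}S,\ms L)\in\mc C_3$ to $\pi_*\ms L$. The main technical step --- and the step I expect to be the main obstacle --- is \emph{cohomology and base change}: $\pi$ is flat and proper of finite presentation, and on every geometric fiber $\ms L$ restricts to $\ms O_{\PP^n}(1)$, which has $h^0=n+1$ and $h^{>0}=0$; since these are constant, $\pi_*\ms L$ is locally free of rank $n+1$ and its formation commutes with arbitrary base change. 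Because the Brauer--Severi condition and the condition ``fiberwise $\ms O(1)$'' may not only be checked but in fact trivialized after an \'etale (then Zariski) cover of $S$, one reduces the whole verification to $\ms X=\PP^n_{S'}$, $\ms L=\ms O(1)$, where it is classical. (One also notes here that ``$\ms L$ restricts to $\ms O(1)$ on fibers'' is well posed: $\Pic(\PP^n_{\kbar})=\Z$ and any automorphism of $\PP^n$ preserves the ample generator, so the condition is independent of the fiberwise trivialization.)

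To see the two functors are quasi-inverse: for $V\in\mc C_1$ the composite returns $\pi_*\ms O_{\PP V}(1)=V$, naturally in $V$, by the chosen convention. Conversely, given $(\ms X,\ms L)$, the counit of $\pi^*\dashv\pi_*$ is an evaluation map $\pi^*\pi_*\ms L\to\ms L$ which \'etale-locally on $S$ is the surjection $\ms O_{\PP^n}^{\oplus(n+1)}\twoheadrightarrow\ms O(1)$ exhibiting global generation; hence it is surjective and defines a morphism $j:\ms X\to\PP(\pi_*\ms L)$ over $S$ with a canonical isomorphism $j^*\ms O(1)\cong\ms L$. That $j$ is an isomorphism is again checked \'etale-locally on $S$, where it is the identity of $\PP^n_{S'}$ under $\pi_*\ms O(1)=\ms O_{S'}^{\oplus(n+1)}$; so $(j,\mathrm{can})$ is a natural isomorphism $(\ms X,\ms L)\xrightarrow{\sim}(\PP(\pi_*\ms L),\ms O(1))$, giving $\mc C_1\simeq\mc C_3$, and composing with $\mc C_1\simeq\mc C_2$ finishes the proof. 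The remaining sanity check concerns automorphisms: $\uAut_S(\PP^n_S,\ms O(1))$ is an extension of $\uAut_S(\PP^n_S)=\PGL_{n+1}$ by $\uAut(\ms O(1))=\G_m$, and the natural action of $\GL_{n+1}$ on the pair $(\PP^n,\ms O(1))$ identifies this extension with $\GL_{n+1}$, matching $\Aut_{\mc C_1}(\ms O_S^{\oplus(n+1)})=\GL_{n+1}(S)$ as it must.
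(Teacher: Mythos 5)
Your proposal is correct and follows essentially the same route as the paper, whose proof simply names the two functors $V\mapsto I_V$ (the frame bundle) and $V\mapsto(\PP V,\ms O_{\PP V}(1))$ and leaves the verification implicit. You supply the details the paper omits — the quasi-inverse $(\ms X,\ms L)\mapsto\pi_*\ms L$ via cohomology and base change, the evaluation map $\pi^*\pi_*\ms L\twoheadrightarrow\ms L$ inducing $\ms X\cong\PP(\pi_*\ms L)$, and the identification $\uAut(\PP^n,\ms O(1))=\GL_{n+1}$ — all of which are accurate.
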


\begin{proof}
    Given a rank $n+1$-vector bundle $V$ on $S$, we define a $\GL_{n+1}$-torsor $I_V$ on $S$ to be 
    \[I_V(T\to S)=\{\text{isomorphisms }\sigma: V|_T\cong \ms O_T^{\oplus n+1}\},\]
    where $\alpha \in \GL_{n+1}(T)=\Aut(\ms O_T^{\oplus n+1})$ acts by composition with $\sigma$.
    The equivalences are given by sending $V$ to $I_V$, and by sending $V$ to $(\PP V, \ms O_{\PP V}(1))$.     
\end{proof}

\begin{definition}
    The image of the class of $\pi:\ms X\to S$ in $H^1(S, \PGL_{n+1})$ under the differential coming from the short exact sequence
    \[1\to \G_m \to \GL_{n+1} \to \PGL_{n+1}\to 1\] 
    is the class $\delta([\ms X])\in H^2(S,\G_m)$, which we call the \textit{Brauer class of the Brauer-Severi stack $\ms X\to S$}.
\end{definition}

\begin{proposition}
    The Brauer class of the Brauer-Severi stack $\ms X\to S$ is given by the class $[\ms G]=\delta([\ms X])\in H^2(S,\G_m)$ of the following $\G_m$-gerbe over $S$,
    \[\ms G(T\to S)=\{\text{degree 1 line bundles on } \ms X\times_S T\}.\]
    Morphisms in $\ms G$ are isomorphisms of line bundles. 
    In particular, $\ms G$ is a trivial gerbe if and only if $\ms X\cong \PP V$ for some vector bundle $V$ on $S$.
\end{proposition}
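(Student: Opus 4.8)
The plan is to recognize $\ms G$ as the gerbe that computes the connecting map $\delta$ applied to the $\PGL_{n+1}$-torsor $I_{\ms X}$ of Proposition~\ref{prop: br-sev eq pgl}. First I would record that $\ms G$ is a $\G_m$-gerbe over $S$: it is an fppf stack by descent for quasi-coherent sheaves; objects exist locally since $\ms O_{\PP^n}(1)$ has degree $1$ on any cover trivializing $\ms X$; and for $L, L' \in \ms G(T)$ the line bundle $L \otimes (L')^{-1}$ restricts to $\ms O$ on every geometric fiber of $\pi$, hence is pulled back from $T$, so $\uIsom(L,L')$ is Zariski-locally non-empty and is a pseudotorsor under $\uAut(L)$. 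Moreover $\uAut(L) = \Gamma(\ms X \times_S T, \ms O^\times) = \G_{m,T}$, canonically in $L$, because $\pi$ is proper, flat, with geometrically connected reduced fibers (in fact $\PP^n$), so $\pi_\ast \ms O_{\ms X} = \ms O_S$ universally by cohomology and base change; this pins down the band as $\G_m$.

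Next I would use the standard description of $\delta$: for a $\PGL_{n+1}$-torsor $P$ on $S$, the class $\delta([P]) \in H^2(S,\G_m)$ associated to $1 \to \G_m \to \GL_{n+1} \to \PGL_{n+1} \to 1$ is represented by the gerbe $\ms L_P$ of liftings, whose objects over $T \to S$ are pairs $(Q, \iota)$ with $Q$ a $\GL_{n+1}$-torsor and $\iota$ an isomorphism from the pushforward of $Q$ along $\GL_{n+1} \to \PGL_{n+1}$ to $P|_T$, and which is banded by $\G_m = \ker(\GL_{n+1}\to\PGL_{n+1})$. It then suffices to build an equivalence of $\G_m$-gerbes $\ms L_{I_{\ms X}} \simeq \ms G$ over $S$, and here Propositions~\ref{prop: br-sev eq pgl} and~\ref{prop: br sev eq gl_n} do the work. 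A $\GL_{n+1}$-torsor $Q$ on $T$ corresponds to a rank $n+1$ vector bundle $V$, and its pushforward to $\PGL_{n+1}$ corresponds (compatibly with Proposition~\ref{prop: br-sev eq pgl}) to the torsor $I_{\PP V}$; so a lifting datum $(Q,\iota)$ is the same as a pair $(V, \psi)$ with $\psi : \PP V \xrightarrow{\sim} \ms X \times_S T$ over $T$, to which I assign the degree-$1$ line bundle $\psi_\ast \ms O_{\PP V}(1) \in \ms G(T)$. Conversely, for $L \in \ms G(T)$ the sheaf $\pi_\ast L$ is locally free of rank $n+1$ with formation commuting with base change (cohomology and base change, $h^0(\PP^n, \ms O(1)) = n+1$ and higher cohomology zero), the counit $\pi^\ast \pi_\ast L \to L$ is surjective and defines a morphism $\ms X \times_S T \to \PP(\pi_\ast L)$ over $T$ which is an isomorphism because it is so on geometric fibers (the morphism attached to a degree-$1$ complete linear system on $\PP^n$). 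Tracking $\ms O(1)$ through these constructions shows they are mutually quasi-inverse, and they intertwine the $\G_m$-actions: the central $\G_m \subseteq \GL_{n+1}$ acts on $V$ by homotheties, which correspond to scaling $\ms O_{\PP V}(1)$ while fixing $\PP V$, hence to scaling $L$. Thus $[\ms G] = \delta([I_{\ms X}]) = \delta([\ms X])$.

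The last assertion is then formal: a $\G_m$-gerbe is trivial iff it has a global section over $S$, i.e.\ iff $\ms X$ admits a degree-$1$ line bundle $L$ over $S$, in which case $V := \pi_\ast L$ is a rank $n+1$ vector bundle with $\ms X \cong \PP V$ by the construction above, while conversely $\ms O_{\PP V}(1)$ is a global section of $\ms G$ when $\ms X \cong \PP V$ (this is also the essential-image part of Proposition~\ref{prop: br sev eq gl_n}). The main obstacle I anticipate is bookkeeping rather than ideas: checking that the chain of equivalences respects the full $\G_m$-gerbe structure (inertia sheaves and the torsors of morphisms), and arranging the cohomology-and-base-change inputs so that $\pi_\ast$ and the formation of $\PP(-)$ are compatible with an arbitrary base change $T \to S$.
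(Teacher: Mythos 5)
Your proposal is correct and follows essentially the same route as the paper: both identify $\delta([\ms X])$ with the lifting gerbe of pairs $(Q,\iota)$ for the extension $1\to\G_m\to\GL_{n+1}\to\PGL_{n+1}\to 1$ and then run the chain of equivalences through Proposition \ref{prop: br sev eq gl_n} (lifting datum $\leftrightarrow$ rank $n{+}1$ bundle $V$ with $\PP V\cong\ms X\times_S T$ $\leftrightarrow$ degree $1$ line bundle on $\ms X\times_S T$). Your write-up simply supplies more of the supporting detail (the band identification via $\pi_*\ms O_{\ms X}=\ms O_S$, the quasi-inverse via $\pi_*L$ and cohomology-and-base-change) that the paper leaves implicit.
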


\begin{proof}
    The class $\delta([\ms X])$ is the $\G_m$-gerbe 
    \[\delta(I_{\ms X})(T\to S)=\{(P, \epsilon:P^{\wedge \PGL_{n+1}}\cong I_{\ms X}|_T\},\]
    where $P$ is a $\GL_{n+1}|_T$-torsor on $T$, and $\epsilon$ is an isomorphism of the pushout of $P$ to $\PGL_{n+1}$ with $I_{\ms X}|_T$. Morphisms in $\delta(I_{\ms X})$ are isomorphisms of $\GL_{n+1}$-torsors that are compatible with the isomorphisms with $I_{\ms X}$. By Proposition \ref{prop: br sev eq gl_n}, we see that the data of a $\GL_{n+1}|_T$-torsor together with an isomorphism of its $\PGL_{n+1}$-pushout to $I_{\ms X}$ is equivalent to the data of a rank $n+1$-vector bundle $V$ on $T$ together with an isomorphism $\PP V\cong \ms X \times_S T$, which is equivalent to the data of a Brauer-Severi variety $Y\to T$ together with a degree 1 line bundle on $Y$ and an isomorphism $Y\cong \ms X\times_S T$, which is equivalent to the data of a degree 1 line bundle on $\ms X\times_S T$.

    Moreover, $\ms G$ is trivial if and only if $[\ms G]=\delta ([\ms X])=0$ if and only if there is a $\GL_{n+1}$ torsor over $S$ whose pushout to $\PGL_{n+1}$ is $I_{\ms X}=[\ms X]$ if and only if there is a vector bundle $V$ on $S$ such that $\ms X\cong \PP V$. 
\end{proof}
 
\subsection{} \label{sec: PV} From this point forward, we will only consider Brauer-Severi stacks over $BG$ of the form $[\PP V/G]\to BG$, where $G$ is as in Section \ref{section: Br(BG)} and $V$ is a representation of the universal cover $\widetilde{G}$. We will use Theorem \ref{thm: Br(BG) main} to describe their Brauer classes. Let $V$ be a representation of $\widetilde{G}$, acting (on the left) via $\rho: \widetilde{G}\to \GL(V)$ such that $B\subseteq \widetilde{G}$ acts by a character $\chi:B\to \G_m.$ Since $B$ acts by a character, we get an induced action of $G$ on $(V\setminus\{0\} )/\G_m$. It will be useful to understand $V$ and $(V\setminus\{0\} )/\G_m$ functorially, so we introduce $\mathbb{A}_V$ and $\PP V$ to represent these.  

Let $\mathbb{A}_V$ represent the functor 
\[\mathbb{A}_V: (\Sch/\Spec k)^\op \to \Set \]
\[(T\to \Spec k)\mapsto \Hom_k(V\otimes_k \ms O_T, \ms O_T)\]

We have $\bbA_V(k)=\Hom_k(V,k)=V^\vee$, and $\bbA_V=\Spec(\Sym^\bullet_kV)$.
Indeed, 
\begin{align*}
    (\Spec(\Sym^\bullet_kV))(T)= & \ \Hom(\Sym^\bullet_kV, \Gamma(T,\ms O_T)) \\
    = & \ \Hom(\Sym^\bullet_k V\otimes_k \ms O_T, \ms O_T) \\ = & \ \Hom(V\otimes_k \ms O_T, \ms O_T) \\ = & \ \bbA_V(T).
\end{align*} 
By $0$ in $\bbA_V(k)$, we mean the zero map $V\to k$. More generally, by 0 in $\bbA_V(T)$ we mean the maps $V\otimes_k \ms O_T\to \ms O_T$ which are not surjective. 

On $k$-points, the action of $\widetilde{G}$ on $V^\vee$ induced by the action of $\widetilde{G}$ on $V$ is given by composition with the action map, and makes the action on $V^\vee$ into a right action. Explicitly, for $g\in \widetilde{G}$ and $f:V\to k$ in $V^\vee$, $g\cdot f: V\to k$, is given by $v\mapsto f(g\cdot v)$. For $b\in B$, $b\cdot f:V\to k$ is given by $v\mapsto f(b\cdot v)=f(\chi(b)v)=\chi(b)f(b)$, so $B$ also acts by $\chi$ on $V^\vee$. 

\begin{remark}
    If one prefers only to work with left actions, then we can make the action of $\widetilde{G}$ on $V^\vee$ into a left action by composing with the inverse action map instead. Explicitly, $g\cdot f:V\to k$ is given by $v\mapsto f(g^{-1}\cdot v)$. In this case, $B$ acts by $\chi^{-1}$. 
\end{remark}

The action of $\widetilde{G}$ makes sence functorially, on $\bbA_V$, as well. Explicitly, $g\cdot (f:V\otimes_k\ms O_T\to \ms O_T)$ is the composition of $f$ with 
\[(v\mapsto g\cdot v)\otimes \id: V\otimes_k \ms O_T\to V\otimes_k \ms O_T.\]

Let $\PP V$ represent the functor 
\[\PP V: (\Sch/ \Spec k)^{\op} \to \Set\]
\[ (T\to \Spec k )\mapsto \{ V\otimes_k \ms O_T \twoheadrightarrow\ms L\}/\sim \]
where two surjections $(V\otimes_k \ms O_T \twoheadrightarrow\ms L_1)$ and $(V\otimes_k \ms O_T \twoheadrightarrow\ms L_2)$ are equivalent if there is an isomorphism $s:\ms L_1\to \ms L_2$ making the diagram
\[\begin{tikzcd}
    V\otimes_k \ms O_T \arrow{r} \arrow{dr} & \ms L_1 \arrow{d}{s} \\ \ & \ms L_2
\end{tikzcd}\]

commute. 

Note that $\PP V$ is the sheafification of the functor
\[ (\PP V)^{\pre}: (\Sch/\Spec k)^\op \to \Set \]
\[ (T\to \Spec k)\mapsto \{ V\otimes_k\ms O_T\twoheadrightarrow\ms O_T\}/\sim\]
with the same equivalence relation. That is, $\PP V\cong (\mathbb{A}_V\setminus \{0\})/\G_m$.

The action of $\widetilde{G}$ on $\PP V$ is similarly given by composition with the action on $V$. Since $B$ acts by a character, $B$ acts trivially on the quotient, and thus the action of $\widetilde{G}$ descends to an action of $G$ on $\PP V$. 

Consider the stack quotient $[\mathbb{P}V/G]\to BG$, which is a Brauer-Severi stack over $BG$, since pulling back along $\Spec k\to BG$ gives $\PP V\to \Spec k$. The Brauer class of $[\PP V/G]$ is given by the $\G_m$-gerbe $\ms G$ of degree 1 line bundles on $[\PP V/G]$ over $BG$,
\begin{equation} \label{eq: ms G}
    \ms G(S\to BG)=\{\text{degree 1 line bundles on }S\times_{BG}[\mathbb{P}V/G]=:\mathbb{P}_S\}.
\end{equation}

\begin{lemma} \label{lemma: P_S}
    The fiber product $\mathbb{P}_S:=S\times_{BG}[\mathbb{P}V/G]$ is given by $T\times^G \PP V:=(T\times \PP V)/G$, where $T\to S$ is the $G$-torsor corresponding to $S\to BG$ and  $G$ acts on $T\times \PP V$ via $g\cdot(t,v)=(g\cdot t,g\cdot v)$. 
\end{lemma}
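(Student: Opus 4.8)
The plan is to describe both stacks by their groupoids of $U$-points (for $U$ a $k$-scheme) and to exhibit a canonical equivalence. The starting observation is that a morphism $S\to BG$ is the same datum as a $G$-torsor $T\to S$, and in this situation there is a canonical identification $S\cong[T/G]$ under which the tautological torsor $T\to[T/G]$ recovers the given $T\to S$. Hence
\[
\mathbb{P}_S \;=\; S\times_{BG}[\PP V/G] \;=\; [T/G]\times_{BG}[\PP V/G],
\]
and it suffices to prove the general fact that for a $k$-group $G$ and $k$-schemes (or algebraic spaces) $X,Y$ carrying $G$-actions there is a canonical equivalence $[X/G]\times_{BG}[Y/G]\cong[(X\times Y)/G]$ with $G$ acting diagonally. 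Applying this with $X=T$ (equipped with its torsor $G$-action) and $Y=\PP V$ then gives the lemma, with the diagonal action $g\cdot(t,v)=(g\cdot t,g\cdot v)$ appearing exactly as stated.

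To prove the general fact I would unwind the $2$-fiber product pointwise. An object of $\bigl([X/G]\times_{BG}[Y/G]\bigr)(U)$ is a triple: a $G$-torsor $P\to U$ with a $G$-equivariant map $a\colon P\to X$ (an object of $[X/G](U)$), a $G$-torsor $Q\to U$ with a $G$-equivariant map $b\colon Q\to Y$, and an isomorphism of $G$-torsors $\phi\colon P\xrightarrow{\sim}Q$ witnessing that the two composites $U\to BG$ agree. Using $\phi$ to identify $Q$ with $P$, this data is the same as a single $G$-torsor $P\to U$ together with a $G$-equivariant map $(a,\,b\circ\phi)\colon P\to X\times Y$, i.e.\ an object of $[(X\times Y)/G](U)$; a morphism in the fiber product is a pair of torsor isomorphisms compatible with $a$, $b$ and $\phi$, which under the same identification is precisely a morphism in $[(X\times Y)/G](U)$. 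One then checks this assignment is functorial in $U$ and strictly compatible with the projections to $BG$ (both equal the class of the torsor $P$), so it defines an equivalence of stacks.

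The only delicate point is the $2$-categorical bookkeeping: keeping track of the $2$-isomorphism $\phi$ and verifying that the equivalence is canonical and compatible with the structure maps of $\mathbb{P}_S$ to $S$ and to $[\PP V/G]$. This is routine diagram-chasing rather than a genuine obstacle. If one prefers to avoid the pointwise description, an alternative is to construct the natural $G$-invariant morphism $T\times\PP V\to S\times_{BG}[\PP V/G]$ coming from $T\to S$, from $T\times\PP V\to\PP V\to[\PP V/G]$, and from the canonical $2$-isomorphism expressing $T$ as the fiber product of $\Spec k\to BG$ with $S\to BG$; this factors through $[(T\times\PP V)/G]=T\times^G\PP V$, and one checks the resulting map is an isomorphism after the fppf base change $T\to S$. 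Over $T$ the torsor trivializes, so $T\to BG$ factors through $\Spec k$, giving $T\times_{BG}[\PP V/G]\cong T\times\PP V$ on the target side, while $(T\times^G\PP V)\times_S T\cong T\times\PP V$ by the standard identification for quotients by a torsor; these agree compatibly with the descent data, so the map is an isomorphism.
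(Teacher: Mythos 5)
Your argument is correct, and the computational core—unwinding the $2$-fiber product over $BG$ into a single $G$-torsor equipped with a $G$-equivariant map—is the same as in the paper. The packaging differs: you first identify $S\cong[T/G]$ and then invoke the general formula $[X/G]\times_{BG}[Y/G]\cong[(X\times Y)/G]$ with the diagonal action, whereas the paper works directly with objects of $S\times_{BG}[\PP V/G]$ (tuples $(S'\to S,\,T'\to S',\,\varphi:T'\to\PP V,\,\sigma:T\times_S S'\cong T')$), reduces them to $G$-equivariant maps $T\times_S S'\to \PP V$, and then matches these with maps $S'\to T\times^G\PP V$ via the cartesian square $T\times\PP V\to T$ over $T\times^G\PP V\to S$. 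Your route is cleaner and more reusable, since the product formula is a standard fact one proves once; the paper's route has the advantage of producing the explicit universal property of $T\times^G\PP V$ (maps $S'\to T\times^G\PP V$ correspond to $G$-equivariant maps $T\times_S S'\to\PP V$), which is what is actually used later when constructing line bundles on $\PP_S$. One small point you leave implicit: your general lemma produces the stack quotient $[(T\times\PP V)/G]$, while the statement asserts the quotient $(T\times\PP V)/G$ as in the lemma's notation $T\times^G\PP V$; these agree because $G$ acts freely on the factor $T$ (it is a torsor), so the stack quotient is an algebraic space and coincides with the ordinary quotient. It would be worth one sentence to say so.
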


\begin{proof}
    The objects of the fiber product $\PP_S$ can be described as tuples
    \[(S'\to S, \ T'\to S', \  \varphi:T'\to \PP V, \ \sigma: T\times_S S'\cong T')\]
    where $S'$ is an $S$-scheme, $T'\to S'$ is a $G$-torsor, $\varphi$ is a $G$-equivariant morphism, and $\sigma$ an isomorphism of $G$-torsors over $S'$. 

    Morphisms between two tuples 
    $(S'\to S, \ T'\to S', \ \varphi, \ \sigma)\to (S''\to S, \ T''\to S'',\  \varphi'', \ \sigma'')$ are given by pairs $(s,t)$, where $s: S'\to S''$ is a morphism of $S$-schemes, and $t:T'\to T''$ is a $G$-equivariant morphism, such that the following diagrams commute,
    \[\begin{tikzcd}
        T'\arrow{rr}{t} \arrow{dr}[swap]{\varphi} && T''\arrow{dl}{\varphi''} \\ \  & \PP V, & \ 
    \end{tikzcd} \ \ \ \ 
    \begin{tikzcd}
        T\times_S S' \arrow{r}{\sigma} \arrow{d}[swap]{\id\times s} & T' \arrow{d}{t} \\ T\times_S S'' \arrow{r}{\sigma''} & T''.
    \end{tikzcd}\]
    
    Note that due to the second compatibility condition, $t$ is completely determined by $t=\sigma'' \circ (\id\times s)\circ \sigma^{-1}$. 

    We can see that this data is equivalent to the stack with objects given by pairs 
    $$(S'\to S, \phi: T\times_S S'\to \PP V)$$
    where $S'$ is an $S$-scheme and $\phi$ is a $G$-equivariant morphism, and morphisms between two tuples 
    $(S'\to S, \ \phi)\to (S''\to S, \phi'')$ are morphisms of $S$-schemes $s: S'\to S''$ such that the diagram commutes,
    \[\begin{tikzcd}
        T\times_S S'\arrow{rr}{\id\times s} \arrow{dr}[swap]{\phi} && T\times_S S''\arrow{dl}{\phi''} \\ & \PP V. &
    \end{tikzcd}\]

    Indeed, an equivalence is given by sending $(S'\to S, T'\to S', \varphi, \sigma)$ to $(S'\to S, \varphi\circ\sigma)$, and on morphisms, $(s,t)\mapsto s$.

    This is fully faithful, since $t$ is determined as remarked above, and essentially surjective since $(S'\to S, T\times_S S'\to S', \phi, \id)\mapsto (S'\to S, \phi)$. 

    Now we define an equivalence between $T\times^G \PP V$ and $\PP_S$. 
    
    First, consider the $G$-torsor $T\times \PP V\to T\times^G \PP V$. With the action of $G$ on $T\times \PP V$ as described above, we get both projection maps, from $T\times \PP V$ to $T$ and $\PP V$, to be $G$-equivariant morphisms. Taking the quotient of $T\times \PP V\to T$ by $G$ gives the following cartesian square,
    \[\begin{tikzcd}
        T\times \PP V \arrow{r} \arrow{d} & T \arrow{d} \\ T\times^G \PP V \arrow{r} & S.
    \end{tikzcd}\]

    Given a morphism $S'\to T\times^G \PP V$, compose with the map to $S$ to get $s:S'\to S$. The fiber product $S' \times_{T\times^G \PP V} (T\times \PP V)$ is given by $T\times_S S'$ due to the commutativity of the above square, and we get a $G$-equivariant map, $T\times_S S'\to T\times \PP V \to \PP V$. Indeed, since $T\times_S S'\to T$ and $T\times \PP V\to T$ are both $G$-equivariant, $T\times_S S'\to T\times \PP V$ is $G$-equivariant, and thus the composition to $\phi: T\times_S S'\to \PP V$ is as well.  

    Conversely, given $s:S'\to S$ and $\phi:T\times_S S'\to \PP V$, take the quotient of $\mathrm{pr}_1 \times \phi: T\times_S S'\to T\times \PP V$ by the action of $G$ to get $S'\to T\times^G \PP V$. These are clearly inverses of each other. 
\end{proof}

\begin{remark}
    One can think of the category of $G$-equivariant morphisms $T\to \PP V$ as pairs $(t,v)$ of input and output, which are well-defined up to the equivalence $(g\cdot t, g\cdot v)\sim (t,v)$, coinciding with the local description of $T\times^G \PP V$. 
\end{remark}

\begin{remark}
    The following commutative diagram can also be used as a justification of Lemma \ref{lemma: P_S},
    \[\begin{tikzcd}
        T\times \PP V \arrow{rr} \arrow{dr}\arrow{dd} && \PP V \arrow{dr}\arrow{dd} & \ \\ & T \arrow{rr}\arrow{dd} && \Spec k \arrow{dd} \\ \PP_S \arrow{rr}\arrow{dr} && {[\PP V/G]} \arrow{dr} & \\ \ & S \arrow{rr} && BG.
    \end{tikzcd}\]
\end{remark} 
\begin{lemma}
    $\bbA_V\setminus\{0\}\to \PP V$ is a $\G_m$-torsor, corresponding to the line bundle $\ms O_{\PP V}(1)$ on $\PP V$, that is, 
    $$\bbA_V\setminus\{0\} =\Tot(\ms O_{\PP V}(1))\setminus\{0\}:=\uSpec_{\PP V}(\Sym^\bullet_k\ms O_{\PP V}(-1))\setminus \{0\}.$$
\end{lemma}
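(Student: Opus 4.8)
The plan is to build the morphism $\bbA_V\setminus\{0\}\to\PP V$ explicitly, verify it is a $\G_m$-torsor by pulling back along an arbitrary point of $\PP V$, and then identify it with the complement of the zero section in $\uSpec_{\PP V}(\Sym^\bullet_k\ms O_{\PP V}(-1))$ by matching functors of points. First I would record that $\bbA_V\setminus\{0\}$ represents the functor $(T\to\Spec k)\mapsto\{\text{surjections }V\otimes_k\ms O_T\twoheadrightarrow\ms O_T\}$: a $T$-point $\varphi\colon V\otimes_k\ms O_T\to\ms O_T$ of $\bbA_V$ factors through the open complement of the origin exactly when $\varphi$ is nonzero at every point of $T$, and since $V$ is finite-dimensional and $\ms O_T$ is invertible this is, by Nakayama, the same as $\varphi$ being surjective. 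The morphism $\bbA_V\setminus\{0\}\to\PP V$ then sends $\varphi$ to its class as a rank-one quotient; in the notation of \ref{sec: PV} it is the composite $\bbA_V\setminus\{0\}\to(\PP V)^{\pre}\to\PP V$ of the $\G_m$-quotient with the sheafification, and in particular it is invariant for the scaling action of $\G_m$ on $\bbA_V=\Spec\Sym^\bullet_k V$.

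Next I would show this map is a $\G_m$-torsor. Let $f\colon T\to\PP V$ be any point, classified by the pulled-back universal quotient $q\colon V\otimes_k\ms O_T\twoheadrightarrow\ms L$ with $\ms L=f^*\ms O_{\PP V}(1)$. Unwinding the fiber product, the sections of $(\bbA_V\setminus\{0\})\times_{\PP V}T$ over $g\colon T'\to T$ are the surjections $V\otimes_k\ms O_{T'}\twoheadrightarrow\ms O_{T'}$ that induce $g^*q$ as a quotient, and such a surjection is exactly the datum of an isomorphism $g^*\ms L\xrightarrow{\sim}\ms O_{T'}$ (the isomorphism is uniquely determined by the two surjections, and conversely recovers the surjection when composed with $g^*q$). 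Hence $(\bbA_V\setminus\{0\})\times_{\PP V}T\cong\uIsom_T(\ms L,\ms O_T)$, which is a $\G_m$-torsor on $T$ because $\ms L$ is invertible; taking $f=\id$ shows $\bbA_V\setminus\{0\}\to\PP V$ is itself a $\G_m$-torsor. As a sanity check one can instead pull back along the standard affine cover $D_+(v)$, with $v$ ranging over a basis of $V$, over which $\bbA_V\setminus\{0\}$ restricts to $D(v)\cong D_+(v)\times\G_m$.

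Finally I would identify this torsor with $\Tot(\ms O_{\PP V}(1))\setminus\{0\}$ by comparing functors of points. On one hand $\uSpec_{\PP V}(\Sym^\bullet_k\ms O_{\PP V}(-1))$ represents $T\mapsto\{(f\colon T\to\PP V,\ s\in\Gamma(T,f^*\ms O_{\PP V}(1)))\}$, and deleting the zero section forces $s$ to be nowhere vanishing, i.e. an isomorphism $\ms O_T\xrightarrow{\sim}f^*\ms O_{\PP V}(1)$. On the other hand, by the universal property of $\PP V$, a surjection $V\otimes_k\ms O_T\twoheadrightarrow\ms O_T$ is the same as a map $f\colon T\to\PP V$ together with an isomorphism of $\ms O_T$ with the pulled-back universal quotient $f^*\ms O_{\PP V}(1)$, i.e. precisely a nowhere vanishing section of $f^*\ms O_{\PP V}(1)$. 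These two descriptions coincide, and the resulting bijection is $\G_m$-equivariant (scaling the surjection scales the section), which yields the asserted isomorphism of $\G_m$-torsors $\bbA_V\setminus\{0\}\cong\Tot(\ms O_{\PP V}(1))\setminus\{0\}$ over $\PP V$, and hence the identification of $\bbA_V\setminus\{0\}\to\PP V$ with the $\G_m$-torsor attached to $\ms O_{\PP V}(1)$.

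The only delicate part is the bookkeeping in the last step: one has to keep straight the chain of identifications (rank-one quotient $\leftrightarrow$ trivialization of the quotient line bundle $\leftrightarrow$ section $\leftrightarrow$ dual section) and, above all, verify that the two $\G_m$-actions match, since it is precisely this equivariance that pins the twisting bundle to $\ms O_{\PP V}(1)$ rather than its dual. Everything else — the representability of $\bbA_V\setminus\{0\}$, the $\G_m$-invariance of the projection, and local triviality — is immediate from the functorial descriptions already set up in \ref{sec: PV}.
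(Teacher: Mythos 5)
Your proposal is correct and follows essentially the same route as the paper: both compute the fiber of $\bbA_V\setminus\{0\}$ over a point $f\colon T\to\PP V$ (classified by $V\otimes_k\ms O_T\twoheadrightarrow\ms L$) and identify it with the set of trivializations $\ms O_T\cong\ms L$, then do the same for $\Tot(\ms O_{\PP V}(1))\setminus\{0\}$ via $\{\Sym^\bullet\ms L^\vee\twoheadrightarrow\ms O_T\}\cong\{\ms O_T\cong\ms L\}$. Your explicit check of $\G_m$-equivariance is a worthwhile addition that the paper leaves implicit.
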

\begin{proof} 
    The map $\bbA_V\setminus\{0\}\to \PP V$ is given by 
    \[(V\otimes_k\ms O_T \twoheadrightarrow\ms O_T)\mapsto [V\otimes_k\ms O_T \twoheadrightarrow\ms O_T]\]
    and objects of $\bbA_V\setminus\{0\}$ over a point $f:T\to \PP V$, corresponding to $\varphi: V\otimes_k \ms O_T\twoheadrightarrow\ms L$ pulled back from the universal quotient $V\otimes_k\ms O_{\PP V}\twoheadrightarrow\ms O_{\PP V}(1)$, are given by 
    \[(\bbA_V\setminus\{0\})(T\to \PP V)=\left\{ \phi: V\otimes_k \ms O_T\twoheadrightarrow\ms O_T| \exists s: 
    \begin{tikzcd}
        V\otimes_k \ms O_T \arrow{r}{\phi} \arrow{dr}[swap]{\varphi} & \ms O_T \arrow{d}{s} \\ \ & \ms L
    \end{tikzcd}
    \text{ commutes}\right\}.\]
    This data is equivalent to
    the set of isomorphisms $\{s: \ms O_T\cong \ms L\}$.

    Objects of $\Tot(\ms O_{\PP V}(1))\setminus \{0\}$ over a point $f:T\to \PP V$ are given by 
    \[\left(\uSpec_{\PP V}(\Sym^\bullet_k\ms O_{\PP V}(-1))\setminus\{0\}\right)(T\to \PP V)=\{\Sym^\bullet \ms L^\vee \twoheadrightarrow \ms O_T\}\cong \{\ms L^\vee \twoheadrightarrow\ms O_T\}\cong \{\ms O_T\cong \ms L\}.\]
    Thus $\bbA_V\setminus\{0\} =\Tot(\ms O_{\PP V}(1))\setminus\{0\}$.    
\end{proof}

\begin{theorem} \label{thm: chi(BG)=ms G}
    Let $G, \widetilde{G}, B, V, \chi$ as in Section \ref{sec: PV} and $\ms G$ as in Equation \ref{eq: ms G}. Then $\chi^{-1}_*(B\widetilde{G})=\ms G$, and thus by Theorem \ref{thm: Br(BG) main}, the class of $\ms G$ in $H^2(BG, \G_m)=X(B)$ is given by $\chi^{-1}$.
\end{theorem}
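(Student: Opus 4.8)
The plan is to run the same argument used in Theorem~\ref{thm: Br(BG) main} to recognize a $\G_m$-gerbe on $BG$ as a pushforward of $B\widetilde G$: restrict $\ms G$ along $B\widetilde G\to BG$, show the restriction is trivial, use a section of it to produce a morphism $B\widetilde G\to\ms G$ over $BG$, and read off the induced character $\psi\in X(B)$ on stabilizer groups. The surjectivity half of Theorem~\ref{thm: Br(BG) main} then yields $\ms G\cong\psi_*(B\widetilde G)$, and the remaining work is to check $\psi=\chi^{-1}$.

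First I would compute the restriction. Pulling $[\PP V/G]\to BG$ back along $B\widetilde G\to BG$ and unwinding torsors as in Lemma~\ref{lemma: P_S} gives a natural isomorphism $[\PP V/G]\times_{BG}B\widetilde G\cong[\PP V/\widetilde G]$, so $\ms G\times_{BG}B\widetilde G\to B\widetilde G$ is the $\G_m$-gerbe of relative degree $1$ line bundles on the Brauer-Severi stack $[\PP V/\widetilde G]\to B\widetilde G$. Now the lemma immediately preceding this theorem identifies $\bbA_V\setminus\{0\}$ with $\Tot(\ms O_{\PP V}(1))\setminus\{0\}$, which exhibits $\ms O_{\PP V}(1)$ as a $\widetilde G$-equivariant line bundle on $\PP V$; it therefore descends to a line bundle on $[\PP V/\widetilde G]$ of relative degree $1$ over $B\widetilde G$. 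Hence $\ms G\times_{BG}B\widetilde G$ has a global section and is trivial, which is the step where the proof of Theorem~\ref{thm: Br(BG) main} instead used $H^2(B\widetilde G,\G_m)=0$.

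Composing the section with the projection to $\ms G$ gives $f\colon B\widetilde G\to\ms G$ over $BG$, whose effect on stabilizers over $BG$ is a homomorphism $\psi\colon B\to\G_m$ with $\ms G\cong\psi_*(B\widetilde G)$, exactly as in Theorem~\ref{thm: Br(BG) main}. To evaluate $\psi$, observe that an element $b\in B=\ker(\widetilde G\to G)$, viewed as an automorphism of the universal $\widetilde G$-torsor over $B\widetilde G$, acts trivially on $\PP V$, so $\psi(b)$ is precisely the scalar by which $b$ acts on the descended line bundle $\ms O_{\PP V}(1)$, i.e.\ the scalar occurring in its $\widetilde G$-linearization. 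The discussion in Section~\ref{sec: PV} computes how $B$ acts on $\bbA_V$, hence on $\Tot(\ms O_{\PP V}(1))$, in terms of $\chi$; carrying this through --- using the left-action convention of the remark there and keeping track of the direction in which an automorphism of a torsor acts on associated bundles --- gives $\psi=\chi^{-1}$. Then Theorem~\ref{thm: Br(BG) main} yields $\ms G=\chi^{-1}_*(B\widetilde G)$ and identifies the class of $\ms G$ in $H^2(BG,\G_m)=X(B)$ with $\chi^{-1}$.

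I expect the main obstacle to be bookkeeping rather than conceptual content: verifying the isomorphism $[\PP V/G]\times_{BG}B\widetilde G\cong[\PP V/\widetilde G]$ and the descent of the degree-$1$ bundle, and, most delicately, pinning down the sign so that the character is $\chi^{-1}$ rather than $\chi$, which is exactly where the left- versus right-action conventions of Section~\ref{sec: PV} must be applied consistently. A more hands-on alternative that bypasses the stabilizer recipe is to produce a relative degree $1$ line bundle on $\PP_S=T\times^G\PP V$ directly: pull it back to the $B$-gerbe $[(T\times\PP V)/\widetilde G]=\PP_S\times_{BG}B\widetilde G$, compare it with the canonical relative degree $1$ line bundle $\ms O_{\PP V}(1)$ living there, note that the ratio has relative degree $0$ and hence descends to a line bundle on $S\times_{BG}B\widetilde G$ whose inertia acts through a character, and recognize such line bundles (for the correct character $\chi^{-1}$) as the objects of $\chi^{-1}_*(B\widetilde G)$ over $S$; this makes the isomorphism explicit at the cost of invoking the relative Picard group of a projective bundle.
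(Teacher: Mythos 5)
Your proposal is correct and follows essentially the same route as the paper: the section of the restricted gerbe you describe (the descended $\ms O_{\PP V}(1)$ on $[\PP V/\widetilde G]$, via the identification of $\bbA_V\setminus\{0\}$ with $\Tot(\ms O_{\PP V}(1))\setminus\{0\}$) is exactly the paper's morphism $\widetilde T\mapsto \widetilde T\times^{\widetilde G}(\bbA_V\setminus\{0\})$, and your stabilizer computation matches the paper's $[(t,v)]\mapsto[(b\cdot t,v)]=[(t,b^{-1}\cdot v)]=[(t,\chi(b)^{-1}v)]$, yielding $\chi^{-1}$.
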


\begin{proof}
    We define a morphism of stacks $B\widetilde{G}\to \ms G$ over $BG$ and show that the morphism on stabilizers is given by $\chi^{-1}$. Given $S\to B\widetilde{G}$ corresponding to a $\widetilde{G}$-torsor $\widetilde{T}\to S$, we must assign this to a degree 1 line bundle on $\PP_S=T\times^G\PP V$, where $T$ is the image of $\widetilde{T}$ in $BG$. Let its image in $\ms G$ be the line bundle corresponding to the $\G_m$-torsor 
    \[\widetilde{T}\times^{\widetilde{G}} \bbA_V\setminus \{0\} \to T\times^G\PP V=\widetilde{T}\times^{\widetilde{G}}\PP V.\] 
    Note that this map appears in the cartesian diagram

    \[\begin{tikzcd}
        \widetilde{T}\times^{\widetilde{G}} (\bbA_V\setminus \{0\}) \arrow{rr} \arrow{d} && {[\bbA_V\setminus\{0\} /\widetilde{G}]} \arrow{d} &&   \\ T\times^G \PP V =\widetilde{T}\times^{\widetilde{G}}\PP V \arrow{rr} \arrow{d} && {[\PP V/\widetilde{G}]} \arrow{rr} \arrow{d} && {[\PP V/G]} \arrow{d} \\ S \arrow{rr} && B\widetilde{G} \arrow{rr} && BG.
    \end{tikzcd}\]

    A morphism $b: \widetilde{T} \to \widetilde{T}'$ of $\widetilde{G}$-torsors whose pushouts to $G$ agree is sent to the morphism 
    \[\widetilde{T}\times^{\widetilde{G}} (\bbA_V\setminus\{0\}) \to \widetilde{T'}\times^{\widetilde{G}}(\bbA_V\setminus\{0\}),\]
    which we describe on the cover by
    \[\widetilde{T}\times (\bbA_V\setminus\{0\}) \to \widetilde{T}'\times (\bbA_V\setminus\{0\}),\ \ (t,v)\mapsto (b\cdot t,v).\]
    Upon taking the quotient by the $\widetilde{G}$-action, the map is $[(t,v)]\mapsto [(b\cdot t,v)]=[(t,b^{-1}\cdot v)]=[(t,\chi(b)^{-1}v)]$.

    Thus, the morphism on automorphism groups $B\to \G_m$ for $B\widetilde{G}\to \ms G$ over $BG$ is given by $\chi^{-1}$, so $\chi^{-1}_*(B\widetilde{G})=\ms G$.
\end{proof}

More generally, let $V_1,\ldots, V_r$ be representations of $\widetilde{G}$, acting via $\rho_i: \widetilde{G}\to \GL(V_i)$ such that $B$ acts by characters $\chi_i$, for $i=1,\ldots, r$. Again the action descends to $\PP V_i$. Consider 
\[[(\PP V_1\times \cdots \times \PP V_r)/G]\to BG, \]
and the $\G_m$-gerbe over $BG$,
\[\ms G^{a_1,\ldots, a_r}(S\to BG)=\Biggl\{ {\ms L \text{ on } S\times_{BG}[\mathbb{P}V_1\times \cdots \PP V_r/G]=\mathbb{P}_S \text{ such that } \atop\forall s:\Spec \overline{k} \to S, \ \ms L|_{\PP_s}\cong \ms O(a_1,\ldots, a_r)}\Biggl\}.\]

Here $\ms O(a_1,\ldots, ,a_r)$ is the line bundle on $\PP V_1\times \cdots \times \PP V_r$ given by the tensor product of the pullbacks of $\ms O(a_i)$ on $\PP V_i$. To simplify notation, we will write $\ms O(a_1,\ldots, a_r)$ on $\PP_S$ for such a line bundle. 

\begin{corollary} \label{cor: V_1,...V_r}
    Let $\psi=\sum_{i=1}^r a_i\chi_i^{-1}$. Then $\psi_*(B\widetilde{G})=\ms G^{a_1,\ldots, a_r}$, so the class of $\ms G^{a_1,\ldots, a_r}$ is given by $\psi\in \widehat{B}$.
\end{corollary}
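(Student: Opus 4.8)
The plan is to reduce Corollary \ref{cor: V_1,...V_r} to Theorem \ref{thm: chi(BG)=ms G} by exhibiting the gerbe $\ms G^{a_1,\ldots,a_r}$ as a suitable ``twisted tensor product'' of the gerbes $\ms G_i$ attached to the individual $V_i$. Concretely, for each $i$ let $V_i$ be viewed as a representation of $\widetilde{G}$ with $B$ acting by $\chi_i$, so by Theorem \ref{thm: chi(BG)=ms G} the $\G_m$-gerbe $\ms G_i$ of degree $1$ line bundles on $S\times_{BG}[\PP V_i/G]$ has class $\chi_i^{-1}\in X(B)$. I would first construct, for an $S$-point of $BG$ given by a $G$-torsor $T$, a natural line bundle $\ms O(a_1,\ldots,a_r)$ on $\PP_S=T\times^G(\PP V_1\times\cdots\times\PP V_r)$ by pulling back the $\ms O(a_i)$ along the projections $\PP_S\to T\times^G\PP V_i$ and tensoring; the point is that this assignment is a morphism of stacks $B\widetilde{G}\to\ms G^{a_1,\ldots,a_r}$ over $BG$ once we trivialize everything along $\Spec k\to BG$, exactly as in the proof of Theorem \ref{thm: chi(BG)=ms G}.

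The key computation is then the induced map on automorphism groups over $BG$, which is a character $B\to\G_m$. Here I would use the same covering description as in the proof of Theorem \ref{thm: chi(BG)=ms G}: a morphism $b:\widetilde{T}\to\widetilde{T}'$ of $\widetilde{G}$-torsors with equal pushout to $G$ acts on $\widetilde{T}\times(\PP V_1\times\cdots\times\PP V_r)$ by $(t,v_1,\ldots,v_r)\mapsto(b\cdot t,v_1,\ldots,v_r)$, and after descent this is $[(t,v_1,\ldots,v_r)]\mapsto[(t,b^{-1}v_1,\ldots,b^{-1}v_r)]$. Since $\ms O(a_i)$ on $\PP V_i$ is $\widetilde{G}$-equivariantly built from $\bbA_{V_i}\setminus\{0\}$, on which $b^{-1}$ scales the $V_i$-coordinate by $\chi_i(b)^{-1}$, the induced scalar on $\ms O(a_1,\ldots,a_r)$ is $\prod_i\chi_i(b)^{a_i}$, i.e. the character $\sum_i a_i\chi_i^{-1}$ additively in $X(B)$. (One must be a little careful with signs: $\ms O(a_i)$ is the $a_i$-th power of $\ms O(1)$, whose total space minus zero is $\bbA_{V_i}\setminus\{0\}$ with $B$ acting by $\chi_i$, so a degree-$1$ line bundle has class $\chi_i^{-1}$ and a degree-$a_i$ one picks up $\chi_i^{-a_i}=a_i\chi_i^{-1}$ in additive notation — this matches $\psi=\sum a_i\chi_i^{-1}$.) Having identified the character as $\psi$, Theorem \ref{thm: Br(BG) main} gives $\psi_*(B\widetilde{G})=\ms G^{a_1,\ldots,a_r}$ and that its class in $H^2(BG,\G_m)=X(B)$ is $\psi$.

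The step I expect to be the main obstacle is verifying that $\ms O(a_1,\ldots,a_r)$ really is a \emph{degree $(a_1,\ldots,a_r)$} line bundle in the sense required — i.e. that the assignment $\widetilde{T}\mapsto\ms O(a_1,\ldots,a_r)$ lands in $\ms G^{a_1,\ldots,a_r}$ and defines a morphism of gerbes, and that the resulting gerbe is genuinely a $\G_m$-gerbe (banded correctly) so that a morphism from $B\widetilde{G}$ forces the class to be the image of the stabilizer character under $\chi_*:H^2(BG,B)\to H^2(BG,\G_m)$. This is essentially the same bookkeeping as in Lemma \ref{lemma: P_S} and Theorem \ref{thm: chi(BG)=ms G} applied to the product $\PP V_1\times\cdots\times\PP V_r$ in place of a single $\PP V$, together with the observation that $H^2(B\widetilde{G},\G_m)=0$ (Theorem \ref{thm: Br(BG) main}) makes every such gerbe over $B\widetilde{G}$ trivial, which is what lets us read off the class from the stabilizer map. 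Once that framework is in place, the proof is short: build the map, compute the character, and invoke Theorem \ref{thm: Br(BG) main}.
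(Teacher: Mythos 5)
Your argument is correct, but it takes a genuinely different route from the paper's. The paper reduces everything to Theorem \ref{thm: chi(BG)=ms G} by decomposing the gerbe: it introduces intermediate gerbes $\ms G^{a_i}$ of $\ms O(a_i)$'s on $S\times_{BG}[\PP V_i/G]$, exhibits $\ms G^{a_1,\ldots,a_r}$ as the contracted product $\ms G^{a_1}\cdots\ms G^{a_r}$ via $(\ms L_1,\ldots,\ms L_r)\mapsto p_1^*\ms L_1\otimes\cdots\otimes p_r^*\ms L_r$ (with band map the multiplication $\G_m\times\cdots\times\G_m\to\G_m$), exhibits each $\ms G^{a_i}$ as $a_i\ms G_i$ by the analogous tensor map, and then simply adds classes, quoting Theorem \ref{thm: chi(BG)=ms G} for $[\ms G_i]=\chi_i^{-1}$. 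You instead rerun the proof of Theorem \ref{thm: chi(BG)=ms G} directly on the product: build a morphism $B\widetilde{G}\to\ms G^{a_1,\ldots,a_r}$ over $BG$ sending $\widetilde{T}$ to the equivariant $\ms O(a_1,\ldots,a_r)$ on $\widetilde{T}\times^{\widetilde{G}}(\PP V_1\times\cdots\times\PP V_r)$, and read off the band map $b\mapsto\prod_i\chi_i(b)^{-a_i}=\psi(b)$ from the covering description. Both are valid. The paper's version buys modularity: the only equivariant-geometry computation lives in Theorem \ref{thm: chi(BG)=ms G}, and the rest is the additivity of gerbe classes under contracted products. Your version avoids setting up products of gerbes at all, at the cost of redoing the equivariant line-bundle bookkeeping — in particular you must equip $\ms O(a_i)$, including for negative $a_i$, with its $\widetilde{G}$-linearization as the $a_i$-th tensor power of the torsor $\bbA_{V_i}\setminus\{0\}\to\PP V_i$, which is exactly the point your parenthetical sign check addresses, and it is right: $b$ scales the fiber of $\ms O(1)$ on $\PP V_i$ by $\chi_i(b)^{-1}$, hence $\ms O(a_i)$ by $\chi_i(b)^{-a_i}$, i.e.\ $a_i\chi_i^{-1}$ additively. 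One small remark: the last step needs only the general fact that a morphism of gerbes $B\widetilde{G}\to\ms H$ over $BG$ inducing $\psi$ on bands identifies $\ms H$ with $\psi_*(B\widetilde{G})$; the full strength of Theorem \ref{thm: Br(BG) main} enters only when you convert that identification into the statement about classes in $X(B)$.
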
 

\begin{proof}
    Using the result from Theorem \ref{thm: chi(BG)=ms G}, we only need to show that $\ms G^{a_1,\ldots, a_r}$ is the product of $\G_m$-gerbes $\ms G^{a_1},\ldots, \ms G^{a_r}$, to be defined below, and each $\ms G^{a_i}$ is the product $a_i\ms G_i$, where $\ms G_i$ is the $\G_m$-gerbe 
    \[\ms G_i(S\to BG)=\{ \ms O(1) \text{ on } S\times_{BG}[\mathbb{P}V_i/G]\}.\]
    as in Theorem \ref{thm: chi(BG)=ms G}. 

    Let $\ms G^{a_i}$ be the following $\G_m$-gerbe over $BG$,
    \[\ms G^{a_i}(S\to BG)=\{ \ms O(a_i) \text{ on } S\times_{BG} [\PP V_i/G]\}.\]

    Note that 
    \[ [\PP V_1 \times \cdots \times \PP V_r/G]\cong [\PP V_1 /G]\times_{BG} \cdots \times_{BG} [\PP V_r/G],\]
    and let $p_i:[\PP V_1 \times \cdots \times \PP V_r/G]\to [\PP V_i/G]$ be the projection maps. To show that $\ms G^{a_1,\ldots, a_r}\cong \ms G^{a_1}\cdots \ms G^{a_r}$, we define a morphism of gerbes 
    \[\ms G^{a_1} \times \cdots \times \ms G^{a_r}\to \ms G^{a_1,\ldots, a_r}\]
    such that the map on stabilizers is the product map $\G_m\times \cdots \times \G_m\to \G_m$. The map is given by 
    \[(\ms L_1,\ldots, \ms L_r)\mapsto p_1^*\ms L_1\otimes \cdots \otimes p_r^*\ms L_r.\]
    Similarly, to show that $\ms G^{a_i}\cong a_i\ms G_i$, we define a morphism $\ms G_i\times \cdots \times \ms G_i \to \ms G^{a_i}$, with $a_i$ copies of $\ms G_i$, such that the map on stabilizers is the product map. The map is given by 
    \[(\ms L_1,\ldots, \ms L_{a_i})\mapsto \ms L_1\otimes \cdots \otimes \ms L_{a_i}. \qedhere\]    
\end{proof}

\begin{example} \label{example:PGL_2}
    Let $G=\PGL_2$, so $\widetilde{G}=\SL_2$, and $B=\mu_2$. Let $V$ be the standard representation of $G$, so $V=k^{\oplus 2}$, and $\SL_2$ acts via 
$\begin{pmatrix}
A & B\\
C & D
\end{pmatrix}\begin{pmatrix}
    x \\ y
\end{pmatrix}= \begin{pmatrix}
    Ax+By \\ Cx+Dy
\end{pmatrix}$. Then $\mu_2$ acts by the standard character, and the action descends to an action of $\PGL_2$ on $\PP V$. The quotient $[\PP V/ \PGL_2]\to B\PGL_2$ is the universal curve over the moduli of genus 0 curves (See Proposition \ref{prop: genus 0 simplifications}, (2)). Let $\ms G$ be the $\G_m$-gerbe of degree 1 line bundles on $[\PP V/\PGL_2]$ over $B\PGL_2$, 
$$\ms G(S\to B\PGL_2)=\{ \text{degree 1 line bundles on } S\times_{B\PGL_2}[\mathbb{P}V/\PGL_2]=:\mathbb{P}_S\},$$
which we recognize as $B\GL_2$.
 Theorem \ref{thm: chi(BG)=ms G} confirms that $\chi_*(B\SL_2)=\ms G\cong B\GL_2$, where $\chi:\mu_2\to \G_m$ is the standard character ($\chi=\chi^{-1}$ here). Note that $B\GL_2$ is indeed a nontrivial gerbe, due to the existence of genus 0 curves over non-algebraically closed fields without line bundles of degree 1.
\end{example}

\begin{example} \label{example:Sym^d}
    The next simplest case to understand is $G=\PGL_2$, as above, and $V=\Sym^d(k^{\oplus 2})=H^0(\PP k^{\oplus 2}, \ms O(d))\cong k^{\oplus d+1}$. We can think of $\PP V$ as the space parameterizing degree $d$ divisors on $\PP k^{\oplus 2}\cong\PP^1$, coming from the vanishing of homogeneous degree $d$ polynomials in two variables, with $d+1$ coefficients, giving a unique polynomial up to scalar multiple. The action of $\mu_2$ on $V$ is induced by the action on $k^{\oplus 2}$ and is given by $(-1)^d$.

    The action descends to an action of $\PGL_2$ on $\PP V$, and the quotient, $[\PP V/\
    \PGL_2]\to B\PGL_2$, is the moduli of degree $d$ divisors on genus 0 curves (See Proposition \ref{prop: genus 0 simplifications} (4)). Let $\ms G_d$ be the $\G_m$-gerbe of degree 1 line bundle on $[\PP V/\PGL_2]$ over $B\PGL_2$,
\begin{align*}
    \ms G_d(S\to B\PGL_2)&=  \{ \text{degree 1 line bundles on } S\times_{B\PGL_2}[\mathbb{P}\Sym^d(k^{\oplus 2})/\PGL_2]\} \\
    & \cong \{ \text{degree } d \text{ line bundles on } S\times_{B\PGL_2}[\mathbb{P}k^{\oplus 2}/\PGL_2]\} \\
    & \cong  d\{\text{degree 1 line bundles on } S\times_{B\PGL_2}[\mathbb{P}k^{\oplus 2}/\PGL_2]\}.
\end{align*}

For the first isomorphism, note that $S\times_{B\PGL_2} [\PP \Sym^d(k^{\oplus 2})/\PGL_2]= \Sym^d_{\PP_S/S}$, where $\PP_S=S\times_{B\PGL_2} [\PP k^{\oplus 2}/\PGL_2]$. Thus $[\ms G_d]=d[\ms G]=[d]\in \Z/2\Z$, where $\ms G$ is as in the previous example. For $d$ even, $\ms G_d$ is a trivial gerbe, corresponding to the trivial character of $\mu_2$, and for $d$ odd, $\ms G_d$ is a nontrivial gerbe, corresponding to the standard character of $\mu_2$. This is equivalent to the statement that a genus 0 curve may not have a divisor of odd degree, but it will always have a divisor of even degree, equal to some power of the canonical divisor. 
\end{example}

We conclude this section by recording a theorem of Gabber which also applies to algebraic stacks. While the proof is essentially the same as previously given, we could not find a reference for the statement generalized to stacks, so we include it here and give a sketch of its proof. 

\begin{theorem} \label{thm: Gabber}(\cite[Theorem 2, page 193]{Gabber}, extended to stacks)
    Let $S$ be a stack, $\pi: \ms X\to S$ a Brauer-Severi stack. The following sequence is exact,
    \[\begin{tikzcd}
H^0_{\et}(S,\Z)\arrow{r} & H^2(S, \G_m)_\tors \arrow{r}{\pi^*} &
H^2(\ms X,\G_m)_\tors \arrow{r} &  0,
\end{tikzcd}\]
    where the first maps sends $1\mapsto -\delta([\ms X])$. 
\end{theorem}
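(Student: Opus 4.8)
The plan is to run the Leray spectral sequence for $\pi\colon\ms X\to S$ and read the asserted sequence off its low-degree terms, following Gabber's original argument while checking that each ingredient survives the passage to an algebraic stack $S$. First I would record that $\pi$ is representable: being a Brauer--Severi stack, $\ms X\to S$ becomes $\PP^n_S\to S$ over an \'etale cover of $S$, so $\pi$ is representable, proper and smooth with geometrically connected fibres, and formation of $R^q\pi_*\G_m$ commutes with base change on $S$. Hence the sheaves $R^q\pi_*\G_m$ may be computed after base change to a cover trivialising $\ms X$, i.e.\ for the projective bundle $\PP^n_S\to S$: one gets $\pi_*\G_m=\G_m$; $R^1\pi_*\G_m=\underline{\Z}$ (because $\Pic(\PP^n_A)=\Z$ for $A$ local and $\PGL_{n+1}$ acts trivially on $\Pic\PP^n$, so the constant sheaf is untwisted); and $R^2\pi_*\G_m$ is torsion-free (its stalks are $H^2(\PP^n_A,\G_m)$, which has no $\ell$-torsion, since by the Kummer sequence $c_1$ identifies $\Pic(\PP^n_A)/\ell$ with $H^2(\PP^n_A,\mu_\ell)$). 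Note that the $H^0_{\et}(S,\Z)$ appearing in the statement is exactly $H^0(S,R^1\pi_*\G_m)$.

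Feeding this into the Leray spectral sequence $E_2^{p,q}=H^p(S,R^q\pi_*\G_m)\Rightarrow H^{p+q}(\ms X,\G_m)$ on the lisse-\'etale site, the only differentials out of the bottom two rows that can be nonzero are the $d_2$'s, and a diagram chase gives: the edge map $\pi^*\colon H^2(S,\G_m)\to H^2(\ms X,\G_m)$ has kernel $\operatorname{im}\!\big(d_2\colon H^0_{\et}(S,\Z)\to H^2(S,\G_m)\big)$, and $\operatorname{coker}(\pi^*)$ is an extension of a subgroup of $H^0(S,R^2\pi_*\G_m)$ by a subgroup of $H^1_{\et}(S,\Z)$. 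Now $H^1_{\et}(S,\Z)$ is torsion-free (from $0\to\underline{\Z}\xrightarrow{m}\underline{\Z}\to\underline{\Z/m}\to0$ and componentwise surjectivity of $H^0(S,\underline{\Z})\to H^0(S,\underline{\Z/m})$), and $H^0(S,R^2\pi_*\G_m)$ is torsion-free (global sections of a torsion-free sheaf); hence $\operatorname{coker}(\pi^*)$ is torsion-free, so $H^2(\ms X,\G_m)_\tors\subseteq\operatorname{im}(\pi^*)$.

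It remains to identify the differential $d_2\colon H^0_{\et}(S,\Z)\to H^2(S,\G_m)$. Exactly as in the classical case, the transgression of the tautological section $1$ of $R^1\pi_*\G_m=\underline{\Z}$ is the obstruction to lifting it to $\Pic(\ms X)$, which unwinds to (minus) the class of the $\G_m$-gerbe of fibrewise-degree-$1$ line bundles on $\ms X$; by the Proposition of Section~\ref{sec: Br-Sev stacks} this class is $\delta([\ms X])$, so $d_2(1)=-\delta([\ms X])$ (the sign is a convention, pinned down by the trivial case $\ms X=\PP V$, where $\ms O_{\PP V}(1)$ lifts $1$ and $\delta([\ms X])=0$), and $\operatorname{im}(d_2)=\Z\cdot\delta([\ms X])$ on each connected component of $S$. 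Finally, $\delta([\ms X])$ is $(n+1)$-torsion, since the coboundary $H^1(S,\PGL_{n+1})\to H^2(S,\G_m)$ factors through $H^2(S,\mu_{n+1})$ via the compatible central extensions by $\mu_{n+1}\subset\G_m$. Thus if $t=\pi^*(\beta)\in H^2(\ms X,\G_m)_\tors$ then $m\beta\in\ker(\pi^*)=\Z\cdot\delta([\ms X])$ for some $m>0$, so $\beta$ is torsion; hence $\pi^*$ restricts to a surjection $H^2(S,\G_m)_\tors\to H^2(\ms X,\G_m)_\tors$ with kernel $\ker(\pi^*)=\Z\cdot\delta([\ms X])=\operatorname{im}\!\big(H^0_{\et}(S,\Z)\to H^2(S,\G_m)\big)$, which is precisely the claimed exact sequence.

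The step I expect to be the main obstacle is the identification $d_2(1)=-\delta([\ms X])$: matching the abstract transgression in the Leray spectral sequence with the concrete ``gerbe of degree-$1$ line bundles'' description of $\delta$ from Section~\ref{sec: Br-Sev stacks}, and fixing the sign. The computations of $R^q\pi_*\G_m$ are routine once one has reduced to projective bundles, but that reduction --- via representability of $\pi$ and base change for $R^q\pi_*$ over a stack --- is the other point that needs to be spelled out.
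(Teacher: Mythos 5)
Your proposal is correct and follows the same route as the paper, which simply cites Gabber's Leray spectral sequence argument and observes that the computations of $R^q\pi_*\G_m$ carry over to stacks because they are \'etale-local; you have filled in the details that the paper defers to Gabber. The only cosmetic difference is that the paper asserts $R^2\pi_*\G_m=0$ while you only prove it is torsion-free, but either statement suffices for the exactness of the torsion sequence.
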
 
\begin{proof}
    Gabber uses the Leray spectral sequence $E_2^{p,q}=H^p(S, R^q\pi_*\G_m)\Rightarrow H^{p+q}(X,\G_m)$ to obtain the exact sequence in the theorem. 
    The calculations that $\pi_*\G_m=\G_m$, $R^1\pi_*\G_m=\Z$ and $R^2\pi_*\G_m=0$ in Gabber's proof apply to Brauer-Severi stacks, because these are computed \'etale locally. 
\end{proof}

\section{The Moduli Spaces $\ms M_A$} \label{section: The moduli spaces}

Let $A=(g, N, d_1,\ldots,d_{N-1})$ be an $N+1$-tuple such that $N|d:=\sum id_i$ and 
\[\widetilde{g}:=\dfrac{1}{2}\left(N(2g-2)+\sum_i d_i(N-\gcd(i,N))+2\right)\]
is an integer greater than 1. It is then a \textit{$\widetilde{g}$-admissible datum} for some $\widetilde{g}>1$, as defined in the introduction. Define the stack $\ms M_A$ to be the stack with objects 
\[\ms M_A(S)=\{(f:C\to S, D_1, \ldots D_{N-1}, \ms L, \phi)\}\]
where $f:C\to S$ is a smooth genus $g$ curve, the $D_i$ are disjoint divisors of degree $d_i$, sections of $(\Sym_{C/S}^{d_i} \setminus \Delta_{d_i})\to S$, where $\Delta_{d_i}$ is the big diagonal, $\ms L$ is a line bundle on $C$ and $\phi: \ms L^{\otimes N}\to \ms O_{C} (\sum iD_i)$ is an isomorphism.

A morphism 
$$(\sigma, \tau): (f:C\to S, D_i, \ms L, \phi)\to (f':C'\to S, D_i', \ms L', \phi')$$
is a pair of isomorphisms $\sigma: C\to C'$ and $\tau: \sigma^*\ms L'\to \ms L$ such that $\sigma^*(D_i')=D_i$ and the following diagram commutes:

\[\begin{tikzcd}
\sigma^*(\ms L'^{\otimes N})\arrow{rr}{\sigma^*(\phi')}\arrow{d}{\tau^{\otimes N}} && \sigma^*\ms O_{C'} (\sum iD_i') \arrow{d}\\
\ms L^{\otimes N} \arrow{rr}{\phi} &  &\ms O_C(\sum iD_i).
\end{tikzcd}\]

Following \cite{Pardini1991}, we roughly explain the correspondence between $\mu_N$-covers $\pi:X\to C$ and objects $(C, D_i,\ms L, \phi)$ of $\ms M_A$. 

For any abelian cover $\pi:X\to Y$ of smooth complete varieties with group $G$, let $D$ be the locus of $Y$ where $\pi$ is ramified. Then $R:=\pi^{-1}(D)$ is the locus on $X$ whose points are stabilized by a subgroup of $G$ not equal to the identity. For a closed point $T$ of $R$, let $H\leq G$ be its stabilizer. Let $\ms O_{X,T}$ be its local ring with maximal ideal $\mathfrak m$. Since $H$ fixes $T$, we get a faithful representation $\psi$ of $H$ on the cotangent space $\mathfrak m/\mathfrak m^2$. Now we can split $D$ into disjoint components $D_{H,\psi}$ according to where $\pi^{-1}(D_{H,\psi})$ has stabilizer $H$ and representation $\psi$. Since $\pi$ is Galois, all points in the same fiber of $\pi$ will have the same stabilizer group $H$ and representation $\psi$. 

In the case where $X$ and $Y$ are curves, $\mathfrak m/\mathfrak m^2$ has dimension 1. Thus, any representation of $H$ is a character of $H$. Moreover, if additionally $G$ is cyclic of order $N$, we have a bijection 
$$\{1,\ldots, N-1\}\to \{(H,\psi): \langle 1\rangle \neq H \leq G=\mu_N, \psi\in \widehat{H}\}$$
given by $m\mapsto \left(\langle \zeta_N^m\rangle \subseteq \mu_N, \psi: \zeta_N^m\mapsto \zeta_{N}^{\gcd(m,N)}\right)$. Because of this, we can label the divisors $D_1, \ldots D_{N-1}$. Moreover, the pushforward $\pi_*\ms O_X$ of the structure sheaf can be broken into eigensheaves $\pi_*\ms O_X=\oplus_{i=0}^{N-1}\ms L_i^{-1}$, where the $\ms L_i^{-1}$ are line bundles such that $\mu_N$ acts on $\ms L_i^{-1}$ by $\zeta_N^i$, and we have an isomorphism
$\ms L_1^{\otimes N} \cong \ms O_Y(\sum iD_i).$ 

Conversely, given $(C, D_1, \ldots, D_{N-1}, \ms L, \phi)$, the cover $X$ is given by $X=\uSpec_C(\ms O_C\oplus \ms L_1^{-1}\oplus\cdots \oplus \ms L_{N-1}^{-1})$, where $\ms L_1=\ms L$ and the remaining $\ms L_i$ are determined by a formula involving the divisors, and the algebra structure of $\ms O_C\oplus \ms L_1^{-1}\oplus\cdots \oplus \ms L_{N-1}^{-1}$ is determined by $\phi$. This is worked out in detail for $\mu_N$-covers of curves in \cite{Pagani_2013}, using the general theory in \cite{Pardini1991}.

Given a $\widetilde{g}$-admissible datum $(g, N, d_i,\ldots,d_{N-1}),$ the condition for whether a corresponding covering curve is connected is as follows. Let $k$ be the greatest common divisor of $N$ and all of the $i$ such that $d_i\neq 0$. Then the covering curve is connected if and only if $\ms L^{\otimes N/k} \otimes \ms O_C(-\sum \frac{i}{k} D_i)$ has exactly order $k$ in the Picard group of $C$ (\cite[Remark 2.11]{Pagani_2013}). Note that if $C$ has genus 0, then this line bundle always has order 1, and thus when $k=1$, all covers are connected and $\ms M_A'=\ms M_A$, and when $k>1$, all covers are disconnected and $\ms M_A'=\emptyset$.

Thus, to study the $\ms M_A'$ with $g=0$, it suffices to study the $\ms M_A$ with $k=1$, where $k$ is defined as above. However, this condition on $k$ does not affect our study of the $\ms M_A$, so we ignore it.

Let $\overline{\ms M_A}$ be the stack with objects 
\[\overline{\ms M_A}(S)=\{(f:C\to S, D_1, \ldots D_{N-1}, \ms L, \phi)\}\]
where $f:C\to S$ is a smooth genus $g$ curve, the $D_i$ are divisors of degree $d_i$, sections of $\Sym_{C/S}^{d_i} \to S$, $\ms L$ is a line bundle on $C$ and $\phi: \ms L^{\otimes N}\to \ms O_{C} (\sum iD_i)$ is an isomorphism. That is, the divisors are no longer required to be disjoint. Morphisms are defined to be the same as for $\ms M_A$. 

We next define a stack $\overline{\ms N_A}$, and show that it is the $\mu_N$-rigidification of $\overline{\ms M_A}$. First we include a bit of background about the Picard stack and Picard scheme. 

\subsection{Picard stack and Picard scheme} 
Let $S$ be a scheme and let $f:X\to S$ be flat and proper. The Picard stack (\cite[Appendix]{Artin1974}) is the stack $\ms Pic_{X/S}$ with objects pairs $(T\to S,\ms L),$ where $T$ is an $S$-scheme and $\ms L$ is a line bundle on $X_T=X\times_S T$. Morphisms are pairs $(t,\sigma):(T\to S, \ms L)\to (T'\to S, \ms L')$ where $t:T\to T'$ is a morphism of $S$-schemes and $\sigma:t_X^*\ms L'\to \ms L$ is an isomorphism of line bundles, where $t_X: X_T\to X_{T'}$. 

Let $g:X\to X'$ be an $S$-morphism, and  $g_T:X_T\to X_T'$. Define 
\[g^*: \ms Pic_{X'/S}\to \ms Pic_{X/S}\]
by $(T\to S, \ms L)\mapsto(T\to S, g_T^*\ms L)$. 

We now define the relative Picard functor and Picard scheme, following \cite{kleiman2005picardscheme}. Let $f:X\to S$ be separated and of finite type.

The relative Picard functor is 
$$\Pic_{X/S}^{\pre}:(\Sch/S)\to \mathrm{Groups}$$
\[(T\to S) \mapsto \Pic(X_T)/\Pic(T).\]
In some cases, this is representable, but in general it is not. Let $\Pic_{X/S}$ denote the fppf sheafification. 

Then an object of $\Pic_{X/S}(T\to S)$ is represented by a pair $(T'\to T, \ms L')$ of an fppf covering $T'\to T$ and a line bundle $\ms L'$ on $X_{T'}$ such that there exists an fppf covering $T''\to T'\times_T T'$ where the pullbacks of $\ms L'$ to $T''$ under the two projections are isomorphic. We will call such objects {\it isomorphism classes of line bundles on $X_T$}. Two representations $(T_1'\to T, \ms L_1')$ and $(T_2'\to T, \ms L_2')$ are equivalent if there exists an fppf covering $T'\to T_1'\times_T T_2'$ such that $\ms L_1'$ and $\ms L_2'$ become isomorphic when pulled back to $X_{T'}$. 

Let $g:X\to X'$ as above, and define 
\[g^*:\Pic_{X'/S}\to \Pic_{X/S}\]
by $(T'\to T, \ms L')\mapsto (T'\to T, g_{T'}^*\ms L')$.

Now suppose that $f_*\ms O_X=\ms O_S$ and $f$ is flat, proper, and  cohomologically flat in dimension 0, that is, the formation of $f_*\ms O_X$ commutes with base change. When $f_*\ms O_X\cong \ms O_S$ holds universally and $f$ has a section \'etale locally, the \'etale sheafification and the fppf sheafification agree. In particular, when $f$ is smooth, $f$ has an \'etale local section. We have a morphism $[-]:\ms Pic_{X/S}\to \Pic_{X/S}$ sending a line bundle $\ms L$ on $X_T$ to the pair $(T\to T,\ms L)$, which we denote by $[\ms L]$.

\begin{lemma}
    Let $f: X\to S$ be flat, proper, and such that $f_*\ms O_X=\ms O_S$ holds universally. Then $[-]:\ms Pic_{X/S}\to \Pic_{X/S}$ realizes the Picard stack as a $\G_m$-gerbe over the Picard scheme.
\end{lemma}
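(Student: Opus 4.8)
The plan is to check the three conditions characterizing a $\G_m$-gerbe: that $[-]\colon \ms Pic_{X/S}\to \Pic_{X/S}$ is an epimorphism of stacks for the fppf topology, that its relative diagonal $\ms Pic_{X/S}\to \ms Pic_{X/S}\times_{\Pic_{X/S}}\ms Pic_{X/S}$ is an epimorphism, and that the sheaf of automorphisms of any object of $\ms Pic_{X/S}$ relative to $\Pic_{X/S}$ carries a coherent identification with $\G_m$. Since $\G_m$ is smooth, working with the fppf topology here is harmless.

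The banding is where the hypothesis on $f$ is used, so I would treat it first. Fix $T\to S$ and a line bundle $\ms L$ on $X_T$, viewed as an object of $\ms Pic_{X/S}$ over $T$. Because $\Pic_{X/S}$ is a sheaf it has no nontrivial automorphisms, so an automorphism of $\ms L$ relative to $\Pic_{X/S}$ is just an $\ms O_{X_T}$-linear automorphism of $\ms L$, i.e.\ an element of $\Gamma(X_T,\ms O_{X_T}^*)$. Since $f_*\ms O_X=\ms O_S$ holds universally we have $f_{T*}\ms O_{X_T}=\ms O_T$, so $f_T^*$ is a ring isomorphism $\Gamma(T,\ms O_T)\xrightarrow{\ \sim\ }\Gamma(X_T,\ms O_{X_T})$ and hence restricts to an isomorphism $\Gamma(T,\ms O_T^*)\xrightarrow{\ \sim\ }\Gamma(X_T,\ms O_{X_T}^*)$ on units. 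Since the hypothesis holds after every base change, the analogous statement holds over any $T'\to T$ and is compatible with pullback; thus the automorphism sheaf of $\ms L$ on $(\Sch/T)_{\mathrm{fppf}}$ is canonically isomorphic to $\G_m|_T$ via $f^*$, and these isomorphisms define the required band.

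For the first condition, recall that $\Pic_{X/S}$ is the fppf sheafification of $T\mapsto \Pic(X_T)/\Pic(T)$: every object of $\Pic_{X/S}(T)$ is, after passing to some fppf cover $T'\to T$, represented by an honest line bundle $\ms L'$ on $X_{T'}$, whose image $[\ms L']$ agrees with the restriction of that object to $T'$. Hence $[-]$ is an epimorphism. For the relative diagonal, observe that an object of $\ms Pic_{X/S}\times_{\Pic_{X/S}}\ms Pic_{X/S}$ over $T$ amounts to a pair $(\ms L_1,\ms L_2)$ of line bundles on $X_T$ with $[\ms L_1]=[\ms L_2]$ in $\Pic_{X/S}(T)$, the gluing isomorphism being automatic because $\Pic_{X/S}(T)$ is a set. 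Unwinding the sheafification, $[\ms L_1]=[\ms L_2]$ means that after some fppf cover $T'\to T$ the bundles $\ms L_1|_{X_{T'}}$ and $\ms L_2|_{X_{T'}}$ agree in $\Pic(X_{T'})$ up to a line bundle $\ms N$ pulled back from $T'$; trivializing $\ms N$ on a further cover $T''\to T'$ yields an isomorphism $\ms L_1|_{X_{T''}}\cong \ms L_2|_{X_{T''}}$, which is precisely a morphism in $\ms Pic_{X/S}$ over $T''$ between the two objects. So the diagonal is an epimorphism, and together with the band this exhibits $[-]\colon \ms Pic_{X/S}\to \Pic_{X/S}$ as a $\G_m$-gerbe.

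The only genuine subtlety is the bookkeeping inside the sheafification: one must translate the formal equality $[\ms L_1]=[\ms L_2]$ of classes into an actual isomorphism of line bundles, which requires a cover that also trivializes the line bundle $\ms N$ coming from the base, and one must check that the identification of the automorphism sheaf with $\G_m$ is canonical and stable under base change --- which is exactly why one needs $f_*\ms O_X=\ms O_S$ to hold \emph{universally} and not merely over $S$. The rest is formal manipulation with stacks and sheaves.
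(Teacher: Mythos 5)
Your proposal is correct and follows essentially the same route as the paper: verify that sections of $\Pic_{X/S}$ are fppf-locally represented by honest line bundles, that two line bundles with the same class become isomorphic on a cover (after trivializing the line bundle pulled back from the base), and that the universal hypothesis $f_*\ms O_X=\ms O_S$ identifies the automorphism sheaf of any object with $\G_m$. If anything, your unwinding of $[\ms L_1]=[\ms L_2]$ via the auxiliary bundle $\ms N$ from the base is slightly more careful than the paper's version of that step.
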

\begin{proof}
    By descent for $\ms Pic_{X/S}$, for any fppf covering $T'\to T$, the category $\ms Pic_{X/S}(T)$ is equivalent to $\ms Pic_{X/S}(T'\to T)$, which has objects pairs $(\ms L', \sigma:\pr_1^*\ms L'\cong \pr_2^*\ms L')$, where $\ms L'$ is a line bundle on $X_{T'}$ and $\sigma$ is an isomorphism of line bundles on $X_{T'\times_T T'}$ which satisfies the cocycle condition. 

    Given $(T'\to T, \ms L')\in \Pic_{X/S}(T)$, we have some fppf cover $T''\to T'\times_T T'$ such that the two pullbacks of $\ms L'$ to $T''$ are isomorphic. Let $(T'\times_T T''\to T'', \ms L'')$ denote the pullback of $(T'\to T, \ms L')$ to $T''$. The two pullbacks of $\ms L''$ to $(T'\times_T T'')\times_{T''} (T'\times_T T'')$ agree, so descent gives a line bundle $\ms L$ on $X_{T''}$. Given $\ms L_1, \ms L_2\in \ms Pic_{X/S}(T)$ such that $(T\to T, \ms L_1)$ and $(T\to T, \ms L_2)$ are equivalent in $\Pic_{X/S}(T)$, there is an fppf cover $T'\to T$ such that $\ms L_1|_{T'}\cong \ms L_2|_{T'}$. Since $f_*\ms O_X=\ms O_S$ holds universally, $H^0(X_T,\ms O_{X_T}^\times)=\ms O_T^\times$, and automorphisms of $\ms L\in \ms Pic_{X/S}(T)$ over $[\ms L]$ are given by $\G_{m,T}$.  
\end{proof}

Let $\ms Pic_{X/S}^d$ to be the open and closed substack of $\ms Pic_{X/S}$ consisting of line bundles of relative degree $d$, and similarly let $\Pic_{X/S}^d$ be the open and closed subscheme of $\Pic_{X/S}$ consisting of isomorphism classes of line bundles of relative degree $d$. We again have a $\G_m$-gerbe $[-]:\ms Pic_{X/S}^d\to \Pic_{X/S}^d$.

In what follows, we will denote elements $(T'\to T, \ms L')$ of $\Pic_{X/S}(T)$ by a single symbol, $\ell$. By $\ell^N$, we mean the element $(T'\to T, \ms L'^{\otimes N})$.

\subsection{Rigidification of $\overline{\ms M_A}$} Define the stack $\overline{\ms N_A}$ to be the stack with objects
\[\overline{\ms N_A}(S)=\left\{\left(f:C\to S, D_1,\ldots, D_{N-1},\ell\right)\middle| \ell^N=\left[\ms O_C\left(\sum_i iD_i\right)\right]\right\}\]
where $f: C\to S$ and $D_i$ are as in the definition of $\overline{\ms M_A}$, and $\ell\in \Pic^{d/N}_{C/S}(S)$. 

A morphism 
$$\sigma: (f:C\to S, D_i, \ell)\to (f':C'\to S, D_i', \ell' )$$
is an isomorphism $\sigma: C\to C'$ of $S$-schemes such that $\sigma^*(D_i')=D_i$ and $\sigma^*\ell'=\ell$.

We have a natural map $\overline{\ms M_A}\to \overline{\ms N_A}$ given by 
\[(f:C\to S, D_i, \ms L, \phi)\mapsto (f:C\to S, D_i, [\ms L]).\]

\begin{proposition}
    $\overline{\ms M_A}\to \overline{\ms N_A}$ is a $\mu_N$-gerbe and thus $\overline{\ms N_A}$ is the $\mu_N$-rigidification of $\overline{\ms M_A}$. 
\end{proposition}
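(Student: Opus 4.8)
The plan is to verify directly that the natural map $\pi\colon\overline{\ms M_A}\to\overline{\ms N_A}$ is an fppf gerbe whose relative inertia is $\mu_N$; since $\mu_N$ is abelian and finite flat over the base, this says precisely that $\pi$ is a $\mu_N$-gerbe, and the identification of $\overline{\ms N_A}$ with the rigidification $\overline{\ms M_A}\thickslash\mu_N$ is then immediate from the universal property of rigidification, once one records that the relevant $\mu_N$ consists of scalar automorphisms and so sits centrally (and flatly) inside $I_{\overline{\ms M_A}}$.

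First I would compute the relative inertia. Fix $\xi=(f\colon C\to S,D_i,\ms L,\phi)\in\overline{\ms M_A}(S)$. Since $C\to S$ is a smooth proper curve with geometrically connected fibres, $f_*\ms O_C=\ms O_S$ holds universally, so $H^0(C,\ms O_C^\times)=\ms O_S^\times$ and every automorphism of $\ms L$ over $\id_C$ is multiplication by a unique unit. An automorphism $(\sigma,\tau)$ of $\xi$ mapping to the identity of $\pi(\xi)$ must have $\sigma=\id_C$, because a morphism of $\overline{\ms N_A}$ is exactly its underlying isomorphism $\sigma$; hence $\tau=u\cdot\id_{\ms L}$ for some $u\in\ms O_S^\times$, and the commuting square in the definition of a morphism of $\overline{\ms M_A}$ reduces to $\phi=u^N\phi$, i.e.\ $u^N=1$. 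Conversely every $u\in\mu_N(S)$ yields such an automorphism. Thus $I_{\overline{\ms M_A}/\overline{\ms N_A}}\cong\mu_N$, realized in $I_{\overline{\ms M_A}}$ as scalars, hence centrally.

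Next I would check the two local surjectivity conditions that, together with the inertia computation, make $\pi$ a $\mu_N$-gerbe. For essential surjectivity: given $(f\colon C\to S,D_i,\ell)\in\overline{\ms N_A}(S)$, the fact that $[-]\colon\ms Pic_{C/S}\to\Pic_{C/S}$ is a $\G_m$-gerbe (the earlier lemma) lets us lift $\ell$, fppf-locally on $S$, to an honest line bundle $\ms L$ with $[\ms L]=\ell$; then $[\ms L^{\otimes N}]=\ell^N=[\ms O_C(\sum iD_i)]$, so $\ms L^{\otimes N}\cong\ms O_C(\sum iD_i)\otimes f^*M$ for a line bundle $M$ on $S$, and after localizing on $S$ to trivialize $M$ we choose an isomorphism $\phi\colon\ms L^{\otimes N}\xrightarrow{\ \sim\ }\ms O_C(\sum iD_i)$; this produces a lift $(C,D_i,\ms L,\phi)$ of $(C,D_i,\ell)$. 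For local fullness: two such lifts $(C,D_i,\ms L_1,\phi_1)$, $(C,D_i,\ms L_2,\phi_2)$ of the same object have $[\ms L_1]=[\ms L_2]=\ell$, so locally there is an isomorphism $\tau_0\colon\ms L_2\xrightarrow{\ \sim\ }\ms L_1$; then $\phi_1\circ\tau_0^{\otimes N}$ and $\phi_2$ are two isomorphisms $\ms L_2^{\otimes N}\to\ms O_C(\sum iD_i)$ differing by a unit $u\in\ms O_S^\times$, and after the fppf base change $\Spec\ms O_S[v]/(v^N-u)$ we replace $\tau_0$ by $v\tau_0$ so that $\phi_1\circ\tau^{\otimes N}=\phi_2$, i.e.\ $\tau$ is a morphism of $\overline{\ms M_A}$ lying over the identity of $(C,D_i,\ell)$. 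Combining the three facts shows $\pi$ is a $\mu_N$-gerbe, and the universal property of rigidification then identifies $\overline{\ms N_A}$ with $\overline{\ms M_A}\thickslash\mu_N$.

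The main obstacle is the bookkeeping between $\ms Pic_{C/S}$ and $\Pic_{C/S}$: an element $\ell\in\Pic_{C/S}(S)$ is only fppf-locally represented by an honest line bundle, and the relation $\ell^N=[\ms O_C(\sum iD_i)]$ is an equality in $\Pic_{C/S}(S)$ that upgrades to an isomorphism of line bundles only after twisting by, and then trivializing, a line bundle on $S$. One must also take care that the isomorphism witnessing local fullness is arranged to lie over the \emph{identity} automorphism of $(C,D_i,\ell)$, not over some other automorphism; this is precisely the role of the $N$-th root extraction, which is the one step that is genuinely fppf-local (rather than étale-local) when $N$ is not invertible in $k$, and is the reason the gerbe need not be an étale one.
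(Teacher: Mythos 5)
Your proposal is correct and follows essentially the same route as the paper: lift $\ell$ to an honest line bundle fppf-locally to get local nonemptiness, compare two lifts by choosing a local isomorphism and extracting an $N$-th root of the resulting unit, and identify the automorphisms over the identity with $\mu_N$ via $\tau^{\otimes N}=\id$. Your treatment is in fact slightly more careful than the paper's on the bookkeeping between $\ms Pic_{C/S}$ and $\Pic_{C/S}$ (the twist by $f^*M$) and on recording centrality of $\mu_N$ in the inertia, but these are refinements of the same argument rather than a different one.
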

\begin{proof}
    Let $(f:C\to S, D_i, \ell)\in \overline{\ms N_A}(S)$. Then $\ell$ is a pair $(S'\to S, \ms L')$ such that there is an fppf cover $S_1\to S'\times_S S'$ where the pullbacks of $\ms L'$ agree. Moreover, that $\ell^N=[\ms O_C(\sum_i iD_i)]$ means there exists an fppf cover $S_2\to S'\times_S S=S'$ where the pullbacks of $\ms L'^{\otimes N}$ and $\ms O_C(\sum_i iD_i)$ agree. Taking a mutual covering of $S_1$ and $S_2$, say $S''=S_1\times_S S_2$ we get a line bundle $\ms L$ on $C_{S''}$, which is the pullback of $\ms L'$, and an isomorphism $\phi: \ms L^{\otimes N}\to \ms O_{C_{S''}}(\sum_i iD_i)$. 

    Let $(f:C\to S, D_i, \ms L_1, \phi_1)$ and $(f:C\to S, D_i, \ms L_2, \phi_2)$ be two elements of $\overline{\ms M_A}(S)$ over $(f:C\to S, D_i, [\ms L_1]=[\ms L_2])\in \overline{\ms N_A}(S)$. Then there exists an fppf cover $S'\to S$ such that $\ms L_1|_{S'}\cong \ms L_2|_{S'}$. Choose an isomorphism, say $\phi$. Then $\phi_2\circ \phi$ agrees with $\phi_1$ up to multiplication by some $s\in \G_m(S')$, and fppf locally, we may take the $N$th root of $s$, and compose with $\phi$, so that fppf locally, our elements of $\overline{\ms M_A}(S)$ are isomorphic.

    Finally, automorphisms of $(f:C\to S, D_i, \ms L, \phi)$ over $(f:C\to S, D_i, [\ms L])$ are given by $\mu_{N, S}$, since they consist of pairs $(\id, \tau)$ such that $\tau:\ms L\cong \ms L$ and $\tau^{\otimes N}=\id$.
\end{proof}

We also define $\ms N_A$ to be the stack with objects and morphisms of $\overline{\ms N_A}$ having disjoint divisors, as in $\ms M_A$. Then $\ms M_A\to \ms N_A$ is also a $\mu_N$-gerbe, and $\overline{\ms M_A}\times_{\overline{\ms N_A}} \ms N_A\cong \ms M_A$.

Let $\ms C_g$ be the universal curve of genus $g$ and consider the stack $\Pic^{d/N}_{\ms C_g/\ms M_g}$, with objects 
\[\Pic^{d/N}_{\ms C_g/\ms M_g}(S)=\left\{\left(f:C\to S, \ell\in \Pic^{d/N}_{C/S}(S)\right)\right\},\]
and morphisms $\sigma:(f:C\to S,\ell)\to (f:C'\to S, \ell')$ given by isomorphisms $\sigma: C\to C'$ such that $\sigma^*\ell'=\ell$. 

We have a natural map $\overline{\ms N_A}\to \Pic^{d/N}_{\ms C_g/\ms M_g}$ given by $(f:C\to S, D_i, \ell)\mapsto (f:C\to S, \ell)$. Note also that 
\[\overline{\ms N_A}=\left(\Sym^{d_1}_{\ms C_g/\ms M_g} \times_{\ms M_g} \cdots \times_{\ms M_g} \Sym^{d_{N-1}}_{\ms C_g/\ms M_g}\right)\times_{\Pic_{\ms C_g/\ms M_g}^d} \Pic^{d/N}_{\ms C_g/\ms M_g},\]
where the fiber product is taken over the map that sends $(f:C\to S, \{D_i\})$ to $(f:C\to S,[\ms O_C(\sum_iiD_i)])$, and the map that sends $(f:C\to S, \ell)$ to $(f:C\to S, \ell^N)$.
The map $\overline{\ms N_A}\to \Pic^{d/N}_{\ms C_g/\ms M_g}$ is projection onto the second factor. 

\begin{proposition} \label{prop: genus 0 simplifications}
    When $g=0$, we can make the following simplifications. 
    \begin{enumerate}
        \item $\ms M_0=B\PGL_2$.
        \item $\ms C_0=[\PP V/\PGL_2]$ is the universal curve of genus 0, where $V=k^{\oplus 2}$ and $\PGL_2$ acts on $\PP V$ via the standard representation of $\SL_2$ on $V$. 
        \item $\Pic^d_{\ms C_0/\ms M_0}\cong B\PGL_2$ for all $d$.
        \item $\Sym^d_{\ms C_0/\ms M_0}=[\PP \Sym^d V/\PGL_2]$ is the moduli of degree $d$ divisors on genus 0 curves.
        \item $\Sym^{d_1}_{\ms C_0/\ms M_0}\times_{B\PGL_2} \cdots \times_{B\PGL_2} \Sym^{d_{N-1}}_{\ms C_0/\ms M_0}=[(\PP \Sym^{d_1}V\times \cdots \times \PP \Sym^{d_{N-1}}V)/\PGL_2]$.
    \end{enumerate}
\end{proposition}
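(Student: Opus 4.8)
The plan is to deduce all five items from the identification $\ms M_0\cong B\PGL_2$ together with the associated-bundle description of fiber products over $BG$ in Lemma \ref{lemma: P_S}, treating them in order. For (1), a genus $0$ curve $C\to S$ is by definition a smooth proper morphism whose geometric fibers are all $\cong\PP^1$, hence a Brauer-Severi stack of relative dimension $1$ by the remark following the definition of Brauer-Severi stacks; Proposition \ref{prop: br-sev eq pgl} then gives $\ms M_0\cong B\PGL_2$, which is Example \ref{ex: M_0=BPGL_2}. For (2), it was already observed in Section \ref{sec: PV} that $[\PP V/\PGL_2]\to B\PGL_2$ is a Brauer-Severi stack of relative dimension $1$ (pulling back along $\Spec k\to B\PGL_2$ yields $\PP V\cong\PP^1$), so it corresponds under (1) to \emph{some} genus $0$ curve over $B\PGL_2$; to identify it with the \emph{universal} one I would observe that for $S\to B\PGL_2$ classifying a $\PGL_2$-torsor $T\to S$, Lemma \ref{lemma: P_S} gives $S\times_{B\PGL_2}[\PP V/\PGL_2]=T\times^{\PGL_2}\PP V$, which is exactly the genus $0$ curve associated to $T$ (the quasi-inverse of $\ms X\mapsto I_{\ms X}$ being the associated-bundle construction, with $\PGL_2=\PGL(V)$ acting on $\PP V$ through its action on the standard representation $V$). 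Hence $[\PP V/\PGL_2]$ represents $\ms C_0$.

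For (3), I would note that for any genus $0$ curve $f\colon C\to S$ one has $f_*\ms O_C=\ms O_S$ universally, and \'etale-locally on $S$ that $C\cong\PP^1_S$; over such a locus every relative-degree-$d$ line bundle on $C_T$ is of the form $\ms O(d)\otimes f_T^*M$ for a unique $M\in\Pic(T)$ (rigidity on $\PP^1$, via cohomology and base change), so the presheaf $T\mapsto\Pic^d(C_T)/\Pic(T)$ is \'etale-locally the terminal sheaf, whence its fppf sheafification $\Pic^d_{C/S}$ is the terminal sheaf over $S$; applied to the universal family this gives $\Pic^d_{\ms C_0/\ms M_0}=\ms M_0=B\PGL_2$ (equivalently, $\Pic^d_{\PP V/k}=\Spec k$ with the trivial $\PGL_2$-action, so $\Pic^d_{\ms C_0/\ms M_0}=[\Spec k/\PGL_2]$). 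For (4), since $\ms C_0$ is a family of smooth curves, $\Sym^d_{\ms C_0/\ms M_0}$ is the relative symmetric product, whose formation commutes with base change; over $\Spec k\to B\PGL_2$ it becomes $\Sym^d_{\PP V/k}$ with its induced $\PGL_2$-action, and since this action is precisely the descent datum for $\Sym^d_{\ms C_0/\ms M_0}$ along the torsor $\Spec k\to B\PGL_2$ we get $\Sym^d_{\ms C_0/\ms M_0}=[\Sym^d_{\PP V/k}/\PGL_2]$; finally a degree-$d$ effective divisor on $\PP V$ is the zero scheme of a nonzero section of $\ms O_{\PP V}(d)$ unique up to scalar, and $H^0(\PP V,\ms O_{\PP V}(d))=\Sym^d V$, yielding a $\PGL_2$-equivariant isomorphism $\Sym^d_{\PP V/k}\cong\PP\Sym^d V$ (matching the convention for $\PP(-)$ from Section \ref{sec: PV} via self-duality of the standard $\SL_2$-representation), hence $\Sym^d_{\ms C_0/\ms M_0}=[\PP\Sym^d V/\PGL_2]$. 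Item (5) is the general identity $[X_1/G]\times_{BG}\cdots\times_{BG}[X_r/G]\cong[(X_1\times\cdots\times X_r)/G]$ for the diagonal action --- an object of either side over $S$ being a $G$-torsor $T\to S$ equipped with a $G$-equivariant morphism $T\to X_1\times\cdots\times X_r$ --- applied with $G=\PGL_2$ and $X_i=\PP\Sym^{d_i}V$, using (4).

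The main obstacle should be item (4): one has to check with care that the formation of the relative symmetric product commutes with base change over the stacky base $\ms M_0$ (so that descent from $\Sym^d_{\PP V/k}$ is justified) and that the isomorphism $\Sym^d_{\PP V/k}\cong\PP\Sym^d V$ is genuinely $\PGL_2$-equivariant and compatible with the functor-of-points conventions for $\PP(-)$; items (1), (2), (3) and (5) are then essentially bookkeeping on top of Proposition \ref{prop: br-sev eq pgl} and Lemma \ref{lemma: P_S}.
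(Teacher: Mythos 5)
Your proposal is correct and follows essentially the same route as the paper: (1) via Proposition \ref{prop: br-sev eq pgl}, (2) via the associated-bundle description of the fiber product, (3) via triviality of the degree-$d$ Picard scheme of a genus $0$ curve, (4) via the $\PGL_2$-equivariant identification $\Sym^d_{\PP V/k}\cong\PP\Sym^d V$, and (5) via commuting products with $\PGL_2$-quotients. The only cosmetic difference is that the paper realizes $\PP\Sym^d V$ as $(\PP V\times\cdots\times\PP V)/S_d$ and interchanges the $S_d$- and $\PGL_2$-quotients, whereas you descend the relative symmetric product directly and identify it with $\PP\Sym^d V$ through sections of $\ms O(d)$; both reduce to the same classical fact.
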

\begin{proof}
    (1) is Example \ref{ex: M_0=BPGL_2}, a special case of Proposition \ref{prop: br-sev eq pgl}. 
    
    For (2), let $V$ be the standard representation of $\SL_2$, so $V=k^{\oplus 2}$, and $\SL_2$ acts via 
    $\begin{pmatrix}
    A & B\\
    C & D
    \end{pmatrix}\begin{pmatrix}
        x \\ y
    \end{pmatrix}= \begin{pmatrix}
        Ax+By \\ Cx+Dy
    \end{pmatrix}$, as in Example \ref{example:PGL_2}. Then $\mu_2$ acts by the standard character, and the action descends to an action of $\PGL_2$ on $\PP V$. Let $f:X\to S$ be a genus 0 curve, and $S\to B\PGL_2$ be the torsor $I_X:(T\to S)\mapsto\{\sigma: X\times_S T\cong \PP V_T=\PP^1_T\} $. Due to the commutativity of the diagram
    \[\begin{tikzcd}
        \PP V\times I_X \arrow{rr} \arrow{dr}\arrow{dd} && \PP V \arrow{dr}\arrow{dd} & \ \\ & I_X \arrow{rr}\arrow{dd} && \Spec k \arrow{dd} \\ (\PP V\times I_X)/\PGL_2 \arrow{rr}\arrow{dr} && {[\PP V/\PGL_2]} \arrow{dr} & \\ \ & S \arrow{rr} && B\PGL_2,
    \end{tikzcd}\]
    we would like to show that $(\PP V\times I_X)/\PGL_2\cong X$. Indeed an isomorphism is given by sending $(x,\sigma)\in \PP V\times I_X$ to $\sigma^{-1}(x)$, which descends to an isomorphism on the quotient. 

    For (3), we have $\Pic_{\ms C_0/\ms M_0}\cong \Z\times \ms M_0$, and for each $d$, $\Pic^d_{\ms C_0/\ms M_0}\cong \ms M_0$, represented by the class of $\ms O(d)$ on $\PP V$. 

    For (4) and (5), we use the fact that for two $\SL_2$ representations $V_1$ and $V_2$ that descend to actions of $\PGL_2$ on $\PP V_1$ and $\PP V_2$, we have $[(\PP V_1 \times \PP V_2)/\PGL_2]\cong [\PP V_1/\PGL_2]\times_{B\PGL_2} [\PP V_2/\PGL_2]$. One can see this by taking the $\PGL_2$ quotient of the cartesian diagram of $\PP V_1\times \PP V_2$. Assuming statement (4), statement (5) follows. For (4), we may view $\PP\Sym^dV$ as the space of polynomials of degree $d$ in 2 variables up to multiplication by a scalar, and the zero locus of this polynomial is a degree $d$ divisor on $\PP V$. Thus, $\PP V\cong \PP \Sym^1V$ and $\PP\Sym^dV=(\PP V\times \cdots \times \PP V)/S_d$. Let $f:X\to S$ be a genus 0 curve. Using a similar argument as in the proof of (2), we see that 
    \[((\PP V\times \cdots \times \PP V)/S_d \times I_X)/\PGL_2=(X\times_S \cdots \times_S X)/S_d,\]
    which shows that $[\PP \Sym^dV/\PGL_2]$ is the moduli of degree $d$ divisors on genus 0 curves. Alternatively,
    \[\Sym^d_{\ms C_0/\ms M_0}=(\ms C_0\times_{\ms M_0} \cdots \times_{\ms M_0} \ms C_0)/S_d=[(\PP V \times \cdots \times \PP V)/\PGL_2]/S_d\]
    \[=[((\PP V\times \cdots \times \PP V)/S_d)/\PGL_2]=[\PP \Sym^dV/\PGL_2]. \qedhere\]
    
\end{proof}

From the proposition, we can conclude that when $g=0$,
\[\overline{\ms N_A}=[(\PP \Sym^{d_1}V\times \cdots \times \PP \Sym^{d_{N-1}}V)/\PGL_2]\to \Pic^{d/N}_{\ms C_0/\ms M_0}=B\PGL_2\]
is a composition of Brauer-Severi stacks. Indeed, write 
\[\ms X_i:=[\PP \Sym^{d_1} V/\PGL_2]\times_{B\PGL_2}\cdots \times_{B\PGL_2} [\PP \Sym^{d_{i}}V/\PGL_2],\]
and we factor $\gamma$ by
\begin{equation} \label{eq: factor gamma}
\begin{tikzcd}
    \gamma: \overline{\ms N_A}=\ms X_{N-1}\arrow{r}{\gamma_{N-1}}  &\cdots \arrow{r}{\gamma_2} & \ms X_1\arrow{r}{\gamma_1} & B\PGL_2,
\end{tikzcd}
\end{equation}
where each $\gamma_i$ is a projection onto the first $i-1$ factors. Then $\ms X_i=\ms X_{i-1}\times_{B\PGL_2} [\PP \Sym^{d_i}V/\PGL_2],$ so $\ms X_i\to \ms X_{i-1}$ is a Brauer-Severi stack, given by the pullback of $[\PP \Sym^{d_i}V/\PGL_2]\to B\PGL_2$.

\section{Proof of Theorems \ref{thm:N_A} and \ref{thm:M_A}} \label{sec: main proof}

Let $A=(0, N, d_1, \ldots, d_{N-1})$ be as in Section \ref{section: The moduli spaces}. From the factorization \ref{eq: factor gamma} of $\gamma:\overline{ \ms N_A}\to B\PGL_2$ as a composition of Brauer-Severi stacks, we get a factorization of $\gamma^*:H^2(B\PGL_2, \G_m)\to H^2(\overline{\ms N_A}, \G_m)$ as a composition of the $\gamma_i^*: H^2(\ms X_{i-1},\G_m)\to H^2(\ms X_i, \G_m)$, and by Gabber's Theorem (\ref{thm: Gabber}), these are surjections such that $\delta([\ms X_i])\mapsto 0$. 

Recall that $\ms X_i=\ms X_{i-1}\times_{B\PGL_2} [\PP \Sym^{d_i}V/\PGL_2]$ is the Brauer-Severi stack given by the pullback of $[\PP \Sym^{d_i}V/\PGL_2]\to B\PGL_2$ to $\ms X_i$. We have 
\[\delta([[\PP \Sym^{d_i}V/\PGL_2]])\in H^2(B\PGL_2,\G_m)=\Z/2\Z\]
given by $[d_i]\in \Z/2\Z$ by Example \ref{example:Sym^d}, so $\delta([\ms X_i])\in H^2(\ms X_{i-1},\G_m)$ is just the image of $[d_i]$ under 
\[H^2(B\PGL_2,\G_m)\twoheadrightarrow H^2(\ms X_{i-1},\G_m).\]

Thus under $H^2(BG,\G_m)\twoheadrightarrow H^2(\ms X_i,\G_m)$, $[d_i]\mapsto 0$, and under $\gamma^*: H^2(BPGL_2, \G_m)\twoheadrightarrow H^2(\overline{\ms N_A}, \G_m)$ and $[d_i]\mapsto 0$ for all $i=1,\ldots, N-1$. Then if all $d_i$ are even, $H^2(\overline{\ms N_A},\G_m)=\Z/2\Z$, and if any $d_i$ is odd, $H^2(\overline{\ms N_A},\G_m)=0$. 

This concludes the proof of Theorem \ref{thm:N_A}.
\qed

Now, since $H^2(\overline{\ms N_A},\G_m)$ is either $0$ or $\Z/2\Z$, depending on the parities of the $d_i$, the Brauer class of $\overline{\ms M_A}$ is either trivial or nontrivial of order 2. We now compute precisely what the Brauer class of $\overline{\ms M_A}$, and thus also $\ms M_A$, is. 

We work with the following diagram
\[\begin{tikzcd}
    \mathbb{P}_{\overline{\ms N_A}} \arrow{r}{\gamma'} \arrow{d}[swap]{\pi'} & \mathbb{P} \arrow{d}{\pi} \\ \overline{\ms N_A} \arrow{r}{\gamma} &B\PGL_2,
\end{tikzcd}\]
where $\PP:=[\PP V/\PGL_2]$ is the universal curve of genus 0, as in Example \ref{example:PGL_2}.

Consider the $\G_m$-gerbe over $B\PGL_2$,
\[\ms G_{d/N}(S\to B\PGL_2)=\{ \text{degree } d/N \text{ line bundles on } S\times_{B\PGL_2} \mathbb{P} \}.\]
 
We have $[\ms G_{d/N}]=[d/N]\in H^2(B\PGL_2, \G_m)=\Z/2\Z.$ 

We will show that $[\overline{\ms M_A}]^{\wedge \G_m}=[\gamma^*\ms G_{d/N}]$, where $[\overline{\ms M_A}]^{\wedge \G_m}$ is the Brauer class of $\overline{\ms M_A}$, that is, the image of $[\overline{\ms M_A}]$ under the map $H^2(\overline{\ms N_A}, \mu_N)\to H^2(\overline{\ms N_A},\G_m)$ coming from the inclusion $\mu_N\to \G_m$, and $\gamma^*\ms G_{d/N}$ is the $\G_m$-gerbe over $\overline{\ms N_A}$ given by the cartesian product $\ms G_{d/N}\times_{B\PGL_2} \overline{\ms N_A}$. In particular,
\[\gamma^*\ms G_{d/N}(S\to \overline{\ms N_A})=\{ \text{degree } d/N \text{ line bundles on } S\times_{B\PGL_2} \mathbb{P} \}.\]

We define a morphism of stacks $\overline{\ms M_A}\to \gamma^*\ms G_{d/N}$ by 
\[(f:C\to S, D_i, \ms L, \phi)\mapsto ((f:C\to S, D_i, [\ms L]),\ms L).\]
We can identify $S\times_{B\PGL_2}\PP$ with the curve $C$, so to give an object of $\gamma^*\ms G_{d/N}$, we must give a degree $d/N$ line bundle on $C$, namely $\ms L$. 
The map on morphisms is the identity map, thus the map on stabilizers is the inclusion $\mu_N\to \G_m$, so indeed $[\overline{\ms M_A}]^{\wedge \G_m}=[\gamma^*\ms G_{d/N}]$. 

If $d/N$ is even, then $\ms G_{d/N}$ is the trivial gerbe over $B\PGL_2$, and if $d/N$ is odd, then it is nontrivial, isomorphic to $B\GL_2$. Together with the calculation of $H^2(\overline{\ms N_A},\G_m)$, the result follows. Finally, consider the open restriction map
\[\mathrm{res}: H^2(\overline{\ms N_A},\G_m)\to H^2(\ms N_A,\G_m),\]
which sends $[\overline{\ms M_A}^{\wedge \G_m}]\mapsto [\ms M_A^{\wedge \G_m}]$. Due to the injectivity of the restriction map on Brauer groups, we get the same result for $[\ms M_A^{\wedge \G_m}].$
\qed

\begin{remark}
    In the case where $d$ is even, we can actually write down the class of $\overline{\ms M_A}$ as a $\mu_N$-gerbe over $\overline{\ms N_A}$, rather than just the class of its pushout to $\G_m$. However, as remarked in the introduction, since the restriction map is not injective on $H^2(\mu_N)$, this is not sufficient to compute the $\mu_N$-class of the open substack $\ms M_A$, whose moduli we are interested in.
 
    Our gerbe of interest, $\overline{\ms M_A}\to \overline{\ms N_A}$ can be rewritten as follows,
    \[\overline{\ms M_A}(f: S\to \overline{\ms N_A})=\{\ms L, \phi: \ms L^{\otimes N} \cong f'^* \ms O(\ms D)\},\]
    where $\ms L$ is a line bundle on $\PP_S$ (which we have identified with $C$), $\ms D=\sum_i\ms D_i$ is the sum of the universal divisors on $\PP_{\ms N_A}$,  and $f': \PP_S\to \PP_{\ms N_A}$.

    Consider the $\G_m$-gerbes over $B\PGL_2$, using notation from Corollary \ref{cor: V_1,...V_r},
    \[\ms G_d(S\to B\PGL_2)=\{ \ms O(d) \text{ on } S\times_{B\PGL_2} \mathbb{P} \},\]
    \[\ms G_{d_1, \ldots, d_{N-1}}^{1,\ldots, N-1}(S\to B\PGL_2)=\left\{ \ms O(1,\ldots, N-1) \text{ on } S\times_{B\PGL_2} \overline{\ms N_A} \right\}.\]
    We have 
    \[[\ms G_d]=[d]\in H^2(B\PGL_2, \G_m)=\Z/2\Z,\]
    \[\left[\ms G_{d_1, \ldots, d_{N-1}}^{1,\ldots, N-1}\right]=\left[\sum id_i\right]=[d]\in H^2(B\PGL_2, \G_m)=\Z/2\Z,\]
    by Corollary \ref{cor: V_1,...V_r} and Example \ref{example:Sym^d}. From this, we can conclude that $\ms G_d\cong \ms G_{d_1, \ldots, d_{N-1}}^{1,\ldots, N-1}$, and the isomorphism comes from the fact that, whenever such line bundles exist, 
    \[\ms O(\ms D)\cong \gamma'^* \ms O(d) \otimes \pi'^*\ms O(1,\ldots, N-1).\]
    The existence of $\ms O(d)$ on $\mathbb{P}$ is equivalent to the existence of $\ms O(1,\ldots, N-1)$ on $\overline{\ms N_A}$, and both exist after pulling back to $\ms G_d$. If $d$ is even, the class of $\ms G_d$ is trivial, meaning that $\ms O(d)$ exists on $\mathbb{P}$ and $\ms O(1,\ldots, N-1)$ exists on $\overline{\ms N_A}$.

    Thus, if $d$ is even, we have $\overline{\ms M_A}\cong \ms M'\otimes \ms M''$, where
    \[\ms M'(f:S\to \overline{\ms N_A})=\{ \ms L', \phi': \ms L'^{\otimes N}\cong f'^* \pi'^* \ms O(1,\ldots, N-1)\},\]
    \[\ms M''(f:S\to \overline{\ms N_A})=\{ \ms L'', \phi'': \ms L''^{\otimes N} \cong f'^* \gamma'^* \ms O(d) \}.\]

    First, note that $\ms M'$ is isomorphic to the $\mu_N$-gerbe of $N$th roots of $\ms O(1,\ldots, N-1)$ on $\overline{\ms N_A}$, 
    \[\sqrt[N]{\ms O(1,\ldots, N-1)}\left(f:S\to \overline{\ms N_A}\right)=\{L,\phi: L^{\otimes N}\cong f^*\ms O(1,\ldots, N-1) \text{ on } S\}.\]
    A map is given by sending $(L,\phi)$ to $(\pi^*L, \pi^*\phi)$. From the exact sequence
    $1\to \mu_N\to \G_m \to \G_m \to 1$, we get
    \[H^1(\overline{\ms N_A},\G_m)\to H^2(\overline{\ms N_A},\mu_N)\to H^2(\overline{\ms N_A},\G_m),\]
    and the first map thus sends the line bundle $\ms O(1,\ldots, N-1)$ to $\ms M'$, so under the second map, $\ms M'\mapsto 0.$ Thus, $[\ms M']^{\wedge \G_m} =[B\G_m] \in H^2(\overline{\ms N_A}, \G_m)$, and thus $[\ms M'']^{\wedge \G_m} =[\overline{\ms M_A}]^{\wedge \G_m}$. 

    Next, the same exact sequence gives 
    \[0= H^1(B\PGL_2, \G_m)\to  H^2(B\PGL_2, \mu_N)\to   H^2(B\PGL_2, \G_m)\to H^2(B\PGL_2, \G_m)\]
    with $H^2(B\PGL_2, \G_m)=\Z/2\Z$. The rightmost map sends $[\ms G_{d/N}]\mapsto [\ms G_d]=0$, so the class of $\ms G_{d/N}$ comes from the pushout of a $\mu_N$-gerbe on $B\PGL_2$, which can be described as follows,
    \[\ms H(f:S\to B\PGL_2)=\{ \ms L, \phi: \ms L^{\otimes N} \cong f'^*\ms O(d) \text{ on } S\times_{B\PGL_2} \mathbb{P} \}.\]

    Thus we see that $\ms M''=\gamma^*\ms H$, and  
    $$\overline{\ms M_A}\cong \ms M'\otimes \ms M''\cong \sqrt[N]{\ms O(1,\ldots, N-1)} \otimes \gamma^* \ms H.$$

    Moreover, this confirms that
    \[[\overline{\ms M_A}]^{\wedge \G_m} =[\ms M'']^{\wedge \G_m}=[\gamma^*\ms H]^{\wedge \G_m} =\gamma^* [\ms G_{d/N}].\]
\end{remark}

\bibliographystyle{alpha}
\bibliography{references}

\end{document}